\documentclass[3p,twocolumn]{elsarticle}





\usepackage[utf8]{inputenc} 
\usepackage[T1]{fontenc}    
\usepackage{subfigure}

\usepackage{amsmath,amssymb,amsfonts}
\usepackage{amsthm}
\usepackage{mathrsfs}
\usepackage{xcolor}
\usepackage{textcomp}
\usepackage{listings}
\usepackage{graphicx}
\usepackage{url}
\usepackage{color}
\usepackage{accents}
\usepackage[font=footnotesize]{caption}
\usepackage[font=footnotesize]{subcaption}
\usepackage{array}

\usepackage{algorithm}
\usepackage{algpseudocode}
\usepackage{bigints}

\newtheorem{theorem}{Theorem}[section]
\newtheorem{lemma}[theorem]{Lemma}

\newtheorem{proposition}[theorem]{Proposition}
\newtheorem{problem}{Problem}
\newtheorem{remark}[theorem]{Remark}
\newtheorem{assumption}{Assumption}
\newtheorem{example}[theorem]{Example}

\DeclareMathAlphabet{\mymathbb}{U}{BOONDOX-ds}{m}{n}


\newcommand{\bx}{{\mathbf{x}}}

\newcommand{\bB}{{\mathbf{B}}}

\newcommand{\bw}{{\mathbf{w}}}
\newcommand{\bu}{{\mathbf{u}}}
\newcommand{\bp}{{\mathbf{p}}}
\newcommand{\bq}{{\mathbf{q}}}

\newcommand{\bc}{{\mathbf{c}}}

\newcommand{\bk}{{\mathbf{k}}}

\newcommand{\setdef}[2]{\{#1 : #2\}}


\newcommand{\real}{\mathbb{R}}


\newcommand{\Sc}{\mathcal{S}}
\newcommand{\Pc}{\mathcal{P}}

\newcommand{\Bc}{\mathcal{B}}
\newcommand{\Cc}{\mathcal{C}}
\newcommand{\Dc}{\mathcal{D}}

\newcommand{\Fc}{\mathcal{F}}

\newcommand{\Kc}{\mathcal{K}}

\newcommand{\Tc}{\mathcal{T}}

\newcommand{\Nc}{\mathcal{N}}

\DeclareSymbolFont{bbold}{U}{bbold}{m}{n}
\DeclareSymbolFontAlphabet{\mathbbold}{bbold}

\newcommand{\norm}[1]{\lVert#1\rVert}
\newcommand{\normb}[1]{\big\lVert#1\big\rVert}

\newcommand\oprocendsymbol{\hbox{$\bullet$}}
\newcommand\oprocend{\relax\ifmmode\else\unskip\hfill\fi\oprocendsymbol}


\newcommand*{\QEDA}{\hfill\ensuremath{\blacksquare}}%

\newcommand\xqed[1]{%
  \leavevmode\unskip\penalty9999 \hbox{}\nobreak\hfill
  \quad\hbox{#1}}
\newcommand\demo{\xqed{$\bullet$}}

\newcommand{\longthmtitle}[1]{\mbox{}\emph{(#1):}}



\begin{document}

\begin{frontmatter}
    \title{\bf Universal Formulas for Safe Control and Their Neural Network Approximations}

    \author[1]{Pol Mestres\corref{cor1}}
    \ead{mestres@caltech.edu}
    \author[2]{Jorge Cort{\'e}s}
    \ead{cortes@ucsd.edu}
    \author[3]{Eduardo D. Sontag}
    \ead{e.sontag@northestern.edu}
    \cortext[cor1]{Corresponding author}
    \address[1]{Department of Mechanical and Civil Engineering, California Institute of Technology, Pasadena, CA, USA}
    \address[2]{Department of Mechanical and Aerospace Engineering, UC San Diego, La Jolla, CA, USA}
    \address[3]{Departments of Electrical and Computer Engineering and Bioengineering, Northeastern University, Boston, MA, USA}
    \begin{keyword}
        Universal formula, control Lyapunov function, control barrier function, neural network     
    \end{keyword}

    \begin{abstract}
        We study the problem of designing a controller that satisfies an arbitrary number of affine inequalities at every point in the state space.
        This is motivated by the fact that a variety of key control objectives, such as stability, safety, and input saturation, are guaranteed by closed-loop systems whose controllers satisfy such inequalities. 
        Many works in the literature design such controllers as the solution to a state-dependent quadratic program (QP) whose constraints are precisely the inequalities.
        When the input dimension and number of constraints are high, computing a solution of this QP in real time can become computationally burdensome.
        Additionally, the solution of such optimization problems is not smooth in general, which can degrade the performance of the system.
        This paper provides a novel method to design a smooth controller that satisfies an arbitrary number of affine constraints. 
        The controller is given at every state as the minimizer of a strictly convex function.
        To avoid computing the minimizer of such function in real time, we introduce a method based on neural networks (NN) to approximate the controller.
        Remarkably, this NN can be used to solve the controller design problem for any task with less than a fixed input dimension and number of affine constraints, and is completely independent of the state dimension.
        This is why we refer to such NN approximation as a NN-based universal formula for control.
        Additionally, we show that the NN-based controller only needs to be trained with datapoints from a bounded set in the state space, which significantly simplifies the training process.
        Various simulations showcase the performance of the proposed solution, and
        also show that the NN-based controller can be used to warmstart an optimization scheme that refines the approximation of the true controller in real time, significantly reducing the computational cost compared to a generic initialization.
    \end{abstract}
\end{frontmatter}

\section{Introduction}

Modern autonomous systems, ranging from self-driving vehicles to aerospace systems, are usually subject to a variety of operational requirements.
These include, but are not limited to, 
trajectory tracking, 
disturbance rejection, stabilization, and guaranteeing state and input constraints.
Although there exist technical tools to design controllers that satisfy each of these specifications, combining them into a single controller that meets all of them is often challenging.
For example, control Lyapunov (respectively, barrier) functions define state-dependent input constraints that guarantee stability (respectively, safety)
of the underlying control system.
However, designing a smooth control law that satisfies all such constraints and can be computed efficiently in real time is significantly difficult.
Addressing this problem is the main motivation for our work here.

\subsection{Literature Review}
Control Lyapunov Functions (CLFs)~\citep{EDS:98,EDS:83} provide a powerful tool to achieve the stabilization of nonlinear control systems.
For control-affine systems, CLFs prescribe an affine inequality in the system input at every state. By design, controllers that satisfy this inequality, such as the pointwise minimum norm controller~\citep{RAF-PVK:96a}
or the CLF universal formula~\citep{EDS:89a}, ensure stability of the closed-loop system.
More recently, Control Barrier Functions (CBFs)~\citep{ADA-SC-ME-GN-KS-PT:19,ADA-XX-JWG-PT:17,PW-FA:07} have been introduced to achieve safety requirements for nonlinear control systems.
Similarly to CLFs, CBFs also prescribe affine inequalities in the input at every state for control-affine systems, and various safe control designs leveraging this inequality have been proposed~\citep{ADA-SC-ME-GN-KS-PT:19,MC-PO-GB-ADA:23,ML-ZS-PJWK-SW:24}.
Affine inequalities in the input
also arise if control authority is limited (as is always the case in practice), or if stability 
needs to be guaranteed in fixed time~\citep{KG-DP:21}.
Often, it is common to face scenarios where the controller must meet multiple objectives, which are encoded by multiple affine inequalities in the input.
There are also a variety of works in the literature that design controllers in this setting.
For example, the CLF-CBF QP in~\citep{ADA-XX-JWG-PT:17,ML-ZS:23-acc} includes both affine constraints in a quadratic program (QP), whose solution defines the desired controller.
Alternatively,~\citep{MZR-BJ:16} unites a CLF and a CBF into a unique function, called control Lyapunov barrier function (CLBF), and uses the known CLF universal formula to derive a controller.
Other control designs include~\citep{PM-JC:23-csl}, which use penalty methods to impose one of the inequalities as a \textit{hard} constraint and the other one as a \textit{soft} constraint,~\citep{PO-JC:19-cdc}, which defines a formula based on the centroids of the two feasible sets defined by the two inequalities, or~\cite{TGM-ADA:23}, where different CBF constraints are united into a single one by designing a smooth approximation of the intersection of the different safe sets. 

However, the aforementioned approaches only lead to a controller that can be obtained in closed-form or in a computationally tractable manner for a low number of constraints~\citep{XT-DVD:24,PM-SSM-PO-LY-ED-JWB-ADA:25}.
To the best of our knowledge, there does not exist an approach in the literature for obtaining a controller with such properties for an arbitrary number of state-dependent affine constraints.
This problem raises intertwined theoretical, practical, and computational challenges. Theoretically, the solution of the QP should be computed at every state, which is impossible in practice. This has led to the use of sampled-data implementations of such controllers~\citep{AS-YC-ADA:20,AJT-PO-JC-AA:21-csl,JB-KG-DP:22}. Even with a sampled-data implementation, from a computational standpoint, solving the QP in real time becomes burdensome and the mismatch between the computed solution and the actual one may degrade system performance.

The problem of designing such a controller is even more challenging if it is required to be smooth. 
The use of smooth controllers is motivated by theoretical reasons (e.g., existence and uniqueness of solutions and use in backstepping designs~\citep{AJT-PO-TGM-ADA:22}) and practical considerations (e.g., avoidance of chattering behavior in digital platforms). Although the existence of an infinitely-times differentiable controller satisfying an arbitrary number of affine state-dependent constraints is guaranteed by~\citep[Proposition 3.1]{PO-JC:19-cdc}, a constructive method for finding such a
controller is lacking. In fact, optimization-based designs such as the QP in~\citep{ADA-XX-JWG-PT:17} can fail to be locally Lipschitz (as exemplified by Robinson's counterexample~\citep{BJM-MJP-ADA:15}) or even continuous~\citep{MA-NA-JC:25-tac}, and additional constraint qualifications are needed in order to ensure their regularity~\citep{BJM-MJP-ADA:15,PM-AA-JC:25-ejc}.

Our work here is also related to a line of research that uses neural networks (NNs) to approximate the solution of model predictive control (MPC) problems~\citep{SWC-TW-NA-VK-MM:22,SC-KS-NA-DDL-VK-GP:18,MH-JK-ST-FA:18,XZ-MB-FB:19}.
In the case of linear dynamics and affine input and state constraints,
the solution of such MPC problem is known~\citep{FB-AB-MM:17} to be piecewise affine, but computing it explicitly is difficult because the number of regions grows exponentially with the number of constraints.
These works show that an approximate solution computed offline with a NN can be used to warmstart the MPC optimization problem online and considerably speed up its computation.

\subsubsection*{Statement of Contributions}

We study the problem of designing a smooth controller that satisfies an arbitrary number of affine constraints at every point in the state space.
In our first contribution 
we introduce a novel controller that 
generalizes the CLF universal formula and
satisfies all the affine inequalities.
Such controller is different from other existing CLF/CBF-based controllers in the literature.
We show that such controller is smooth, and can be computed as the minimizer of a strictly convex function which has not been introduced elsewhere in the literature.

Since a closed-form expression for the controller is not available for cases with more than one constraint, our second contribution consists of a numerical scheme to approximate the controller using NNs.
Although similar schemes have been introduced elsewhere in the literature to approximate other controllers,
we show that in our case, remarkably, a single neural network (appropriately trained with parameters equal to the coefficients of the affine inequalities) is enough to obtain controllers for any task involving systems with the same input dimension and the same number of inequalities, regardless of the state dimension. Additionally, we show that such NN approximation can be obtained by only sampling data points over a compact set of parameters, and that the NN-based approximate controller is smooth (by taking the activation functions of the NN to be smooth). Although such approximation is not guaranteed to be close to the true controller and to satisfy the inequalities at every point, the universal approximation theorem of NNs ensures that it becomes an arbitrarily good approximation of the true controller as the number of parameters of the NN increases.
Additionally, using recent advances in training NNs with hard constraints (such as HardNet~\cite{YM-NA:25} or $\Pi$-net~\cite{PDG-AT-ECB-RD-JL:25}), the NN can be trained so that the NN-based controller satisfies the affine constraints at every point by construction, although in that case it is not guaranteed to be smooth.
Furthermore, in real-time control applications, the NN approximations can be used to warmstart an optimization scheme to compute the true controller with higher accuracy.

Finally, in our third contribution we use the NN approximations in two safe stabilization tasks, both directly and as a warmstart of an optimization scheme to compute the true controller value.
We show that the use of the NN significantly reduces the execution time of the controller compared to other standard controllers 
in the literature such as the minimum-norm CLF-CBF QP controller~\citep{ADA-XX-JWG-PT:17}, which need to be computed online by solving an optimization problem at every state.

\subsubsection*{Notation}
We denote by $\mathbb{Z}_{>0}$ and $\real$ the set of positive integers and real numbers, respectively.
For $N\in\mathbb{Z}_{>0}$, we write 
$[N] = \{ 1, \hdots, N \}$.
Given $l\in\mathbb{Z}_{>0}$ and $\Sc\subset\real^n$,
the set of $l$-times continuously differentiable functions in $\Sc$ is denoted by $\Cc^l(\Sc)$.
Vectors are represented by boldface symbols whereas scalars are represented by non-boldface symbols.
The zero vector in $\real^n$ is denoted by $\mathbf{0}_n$, and the identity matrix of dimension $n\times n$, by $\mathbf{I}_n$.
Given $\bx\in\real^n$, $\norm{\bx}$ denotes its Euclidean norm.
A function $\beta:\real\to\real$ is of class $\Kc$ if $\beta(0)=0$ and $\beta$ is strictly increasing. If moreover, $\lim\limits_{t\to\infty}\beta(t)=\infty$ and $\lim\limits_{t\to-\infty}\beta(t)=-\infty$, then $\beta$ is of extended class $\Kc_{\infty}$.
A function $V:\real^n\to\real$ is positive definite if $V(\mathbf{0}_n) = \mathbf{0}_n$ and $V(\bx) > 0$ for all $\bx\neq\mathbf{0}_n$.
The ball centered at $\bp\in\real^n$ with radius $r > 0$ is denoted by $\Bc_r(\bp)$.
Let $F:\real^n\to\real^n$ be a locally Lipschitz vector field and consider the dynamical system $\dot{\bx} = F(\bx)$.
Local Lipschitzness of $F$ ensures that, for every initial condition $\bx_0\in\real^n$, there exists $T>0$ and a unique trajectory $x:[0,T]\to\real^n$ such that $x(0) = \bx_0$ and $\dot{x}(t)=F(x(t))$.
If all solutions exist for all $t\geq0$, then we say the dynamical system is forward complete.
In this case, we let
$\Phi_t:\real^n\to\real^n$ denote the flow map, defined by $\Phi_t(\bx) = x(t)$, where $x(t)$ is the unique solution of the dynamical system starting at $x(0) = \bx$. 
Given a forward complete dynamical system, a set $\Kc\subset\real^n$ is (positively) forward invariant if $\bx\in\Kc$ implies that $\Phi_t(\bx)\in\Kc$ for all $t\geq0$.
A point $\bp$ is Lyapunov stable if, for every open set $U$ containing $\bp$, there exists an open set $\tilde{U}$ also containing $\bp$ such that for all $\bx\in\tilde{U}$, $\Phi_t(\bx)\in U$ for all $t\geq0$.
An equilibrium point $\bp$ is asymptotically stable if it is Lyapunov stable and there exists an open set $\bar{U}$ containing $\bp$ such that $\Phi_t(\bx)\to \bp$ as $t\to\infty$ for all $\bx\in\bar{U}$.

\section{Problem Statement}\label{sec:problem-statement}

Consider a nonlinear control-affine system of the form
\begin{align}\label{eq:control-affine-system}
    \dot{\bx} = f(\bx) + g(\bx)\bu,
\end{align}
where $f:\real^n\to\real^n$ and 
$g:\real^n\to\real^{n\times m}$
are locally Lipschitz, with $\bx\in\real^n$ the state and $\bu\in\real^m$ the input.
We assume that $f(\mathbf{0}_n) = \mathbf{0}_n$, so that the origin is an equilibrium of the unforced system.
The following motivating examples provide instances on how state-dependent affine inequalities in the input arise when designing controllers aimed at meeting specific control objectives.

\begin{example}\longthmtitle{Control Lyapunov function for stabilization}\label{ex:clf}
    A continuously differentiable positive definite function $V:\real^n\to\real$ with compact level sets is a control Lyapunov function (CLF) if there exists a neighborhood $\Dc$ of the origin such that for all $\bx\in\Dc$, there exists $\bu\in\real^m$ such that 
    \begin{align}\label{eq:clf-ineq}
        \nabla V(\bx)^\top f(\bx) + \nabla V(\bx)^\top g(\bx) \bu + W(\bx) \leq 0,
    \end{align}
    where $W:\real^n\to\real$ is a positive definite function.
    A locally Lipschitz controller $k:\real^n\to\real^m$ such that $\bu = k(\bx)$ satisfies the inequality~\eqref{eq:clf-ineq} at every $\bx\in\Dc$ renders the origin of the closed-loop system $\dot{\bx} = f(\bx) + g(\bx)k(\bx)$ asymptotically stable.
    \demo
\end{example}

\begin{example}\longthmtitle{Control barrier function for ensuring safety}\label{ex:cbf}
    Let $\Cc\subset\real^n$ be a safe set of interest, and suppose that $\Cc$ is given by the $0$-superlevel set of a continuously differentiable function $h:\real^n\to\real$, i.e., $\Cc = \setdef{\bx\in\real^n}{h(\bx) \geq 0}$.
    This function is a control barrier function (CBF) of $\Cc$ if, for every $\bx\in\Cc$, there exists $\bu\in\real^m$ such that 
    \begin{align}\label{eq:cbf-ineq}
        \nabla h(\bx)^\top f(\bx) + \nabla h(\bx)^\top g(\bx) \bu + \alpha(h(\bx)) \geq 0,
    \end{align}
    where $\alpha:\real\to\real$ is an extended class $\Kc_{\infty}$ function.
    A locally Lipschitz controller $k:\real^n\to\real^m$ such that $\bu = k(\bx)$ satisfies the inequality~\eqref{eq:cbf-ineq} at every $\bx\in\Cc$ renders the set $\Cc$ forward invariant for the closed-loop system $\dot{\bx} = f(\bx) + g(\bx) k(\bx)$.
    \demo
\end{example}

Examples~\ref{ex:clf} and~\ref{ex:cbf} provide two illustrations of control-theoretic properties (stability and safety) that can be achieved by designing controllers that  satisfy state-dependent affine inequalities in the input. In general, one can have an arbitrary number of such affine inequalities to encode, for instance, safety constraints related to collision avoidance
with different obstacles in the environment.

Formally, let $N\in\mathbb{Z}_{>0}$, and for $i\in[N]$, let  $a_i:\real^n\to\real$, $b_i:\real^n\to\real^{m}$ be functions for some $l\in\mathbb{Z}_{\geq 0}$. Let $\Fc$ be the set of points where the inequalities 
$\{a_i(\bx) + b_i(\bx)^\top \bu < 0\}_{i \in [N]}$
are simultaneously strictly feasible, i.e., 
\begin{align*}
    \Fc &:= \setdef{ \bx\in\real^n }{ \exists \bu\in\real^m \ \ \text{s.t.} \\
    &\quad \quad \ a_i(\bx) + b_i(\bx)^\top \bu < 0, \; \forall i\in[N] }.
\end{align*}

Throughout the paper, we make the following assumptions:
\begin{assumption}\longthmtitle{Regularity}\label{as:regularity}
    The functions $\{ a_i, b_i \}_{i=1}^N$ belong to $\Cc^{l}(\real^n)$ for some $l\in\mathbb{Z}_{\geq0}$.
\end{assumption}

\begin{assumption}\longthmtitle{Feasible set}\label{as:feasible-set}
    The set $\Fc$ is non-empty.
\end{assumption}

Since $\Fc$ is the projection in $\real^n$ 
of the open set $\setdef{(\bx,\bu)\in\real^n\times\real^m}{a_i(\bx)+b_i(\bx)^\top \bu < 0, \ \forall i\in[N]}$, $\Fc$ is also open. 
In the case where $N=2$ and the two inequalities correspond to the strict versions of the CLF and CBF inequalities~\eqref{eq:clf-ineq} and~\eqref{eq:cbf-ineq}, respectively, 
we have
\begin{alignat*}{2}
    a_1(\bx) &= \nabla V(\bx)^\top f(\bx) + W(\bx), \; \\ 
    b_1(\bx) &= g(\bx)^\top \nabla V(\bx),
    \\
    a_2(\bx) &= -\nabla h(\bx)^\top f(\bx) - \alpha(h(\bx)) , \; \\
    b_2(\bx) &= -g(\bx)^\top \nabla h(\bx).
\end{alignat*}
In this case, the property that $\Fc\neq\emptyset$ (cf., Assumption~\ref{as:feasible-set}) is known as strict compatibility of the CLF and CBF pair, cf.~\citep{PO-JC:19-cdc,PM-JC:25-tac}.
\cite[Proposition 3.1]{PO-JC:19-cdc} is an extension of Artstein's Theorem (cf.~\citep{ZA:83}) showing that if 
$\Fc$ is non-empty,
there exists a $\Cc^{\infty}$ controller $k:\Fc\to\real^m$ such that 
$a_i(\bx) + b_i(\bx)^\top k(\bx) < 0$ 
for all $\bx\in\Fc$ and all $i \in [N]$.
The proof of this result, however, is not constructive, which is undesirable for the real-time control of the nonlinear system~\eqref{eq:control-affine-system}. Hence, in this paper we set out to solve the following problem.

\begin{problem}\label{problem:problem}
    Design a smooth controller that satisfies the constraints 
    $a_i(\bx) + b_i(\bx)^\top \bu < 0$ 
    for all $i\in[N]$ and all $\bx\in\Fc$.
\end{problem}

\section{A Universal Formula for an Arbitrary Number of Affine Constraints}\label{sec:a-universal-formula-for-an-arbitrary-number-of-affine-constraints}

In this section we introduce a smooth universal formula for a controller satisfying an arbitrary number of affine inequalities.
Let $A_1, \hdots, A_N \in\real$ and $\bB_1, \hdots, \bB_N \in \real^m$. Let 
$\bp = (A_1, \hdots, A_N, \bB_1, \hdots, \bB_N)$ and define
$\Kc_{\bp} := \setdef{ \bk\in\real^m }{ A_i + \bB_i^\top \bk < 0, \; \forall i\in[N] }$.
Note that $\Kc_{\bp}$ is a convex polytope.
Now, define the function $J_{\bp}:\Kc_{\bp}\to\real$ as 
\begin{align}\label{eq:Jp-definition}
    J_{\bp}(\bk) = -\sum_{i=1}^N \frac{\norm{\bB_i}^2+\norm{\bk}^2}{2(A_i + \bB_i^\top \bk  )}.
\end{align}

Our first result shows that $J_{\bp}$ is strictly convex.

\begin{proposition}\longthmtitle{Strict convexity}\label{prop:strict-convexity}
    Let $\bp \in \real^{N+mN}$.
    The function $J_{\bp}$ is strictly convex in the convex domain $\Kc_{\bp}$.
\end{proposition}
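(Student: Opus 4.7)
The plan is to show that each summand in the definition of $J_{\bp}$ is itself strictly convex on $\Kc_{\bp}$; strict convexity of the sum then follows immediately. Fix $i \in [N]$, set $\bB := \bB_i$, $A := A_i$, and define $t(\bk) := -(A + \bB^\top \bk)$, which is strictly positive on $\Kc_{\bp}$ by definition of the set. Then the $i$-th summand becomes
\begin{align*}
    \phi_i(\bk) \;=\; \frac{\|\bB\|^2 + \|\bk\|^2}{2\, t(\bk)},
\end{align*}
so the problem reduces to showing that each $\phi_i$ has positive-definite Hessian on $\Kc_{\bp}$.

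The first concrete step is a direct computation of $\nabla \phi_i$ and $\nabla^2 \phi_i$ using the quotient rule, together with $\nabla t = -\bB$ and $\nabla(\|\bk\|^2) = 2\bk$. I expect this to yield, in matrix form,
\begin{align*}
    \nabla^2 \phi_i \;=\; \frac{1}{t}\, I \;+\; \frac{1}{t^2}\bigl(\bk \bB^\top + \bB \bk^\top\bigr) \;+\; \frac{\|\bB\|^2 + \|\bk\|^2}{t^3}\, \bB \bB^\top.
\end{align*}
The key step, and what I anticipate to be the main algebraic obstacle, is to massage the associated quadratic form $\bu^\top (\nabla^2 \phi_i)\, \bu$ into a manifestly non-negative shape. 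Setting $\beta := \bB^\top \bu$, multiplying through by $t^3 > 0$, and grouping the cross-term with $\|\bk\|^2 \beta^2$, I expect the identity
\begin{align*}
    t^3\, \bu^\top (\nabla^2 \phi_i)\, \bu \;=\; \bigl\|\, t \bu + \beta\, \bk \,\bigr\|^2 \;+\; \|\bB\|^2 \beta^2,
\end{align*}
which is obtained by completing the square in $t\bu$ and $\beta \bk$ and recognizing that the coefficient $\|\bB\|^2 + \|\bk\|^2$ of $\beta^2$ splits as $\|\bk\|^2 + \|\bB\|^2$ so that the first piece absorbs into the square and the second piece remains as a separate non-negative summand.

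Finally, strict positivity follows by a short case analysis. The right-hand side above is a sum of two squares and hence vanishes only if both do. If $\bB = \mathbf{0}_m$, the second summand is automatically zero but the first reduces to $\|t \bu\|^2$ and, since $t > 0$, forces $\bu = \mathbf{0}_m$. If $\bB \neq \mathbf{0}_m$, the second summand forces $\beta = 0$, whereupon the first reduces again to $\|t\bu\|^2$ and yields $\bu = \mathbf{0}_m$. Hence $\nabla^2 \phi_i(\bk)$ is positive definite throughout $\Kc_{\bp}$, so each $\phi_i$ is strictly convex there, and $J_{\bp} = \sum_{i=1}^N \phi_i$ inherits strict convexity as claimed. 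The convexity of the domain $\Kc_{\bp}$ is immediate since it is the intersection of finitely many open halfspaces.
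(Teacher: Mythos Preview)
Your proof is correct and follows the same overall architecture as the paper: decompose $J_{\bp}$ into its $N$ summands, compute the Hessian of each, and show it is positive definite on $\Kc_{\bp}$. The difference lies only in how the positive-definiteness is extracted from the quadratic form. The paper substitutes $z=\bB_i^\top\bx$, views the form as a scalar quadratic in $z$, and shows its discriminant is negative via Cauchy--Schwarz. You instead complete the square to obtain the explicit sum-of-squares identity
\[
t^3\,\bu^\top(\nabla^2\phi_i)\,\bu \;=\; \|t\bu+\beta\bk\|^2+\|\bB\|^2\beta^2,
\]
from which strict positivity is read off directly. Your route is a bit more transparent (the non-negativity is visible without invoking a discriminant test or Cauchy--Schwarz), while the paper's discriminant argument avoids having to spot the right square to complete; both reach the same conclusion with comparable effort.
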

\begin{proof}
Let $J_{i,\bp}:\Pc\to\real$ be defined as
\begin{align*}
    J_{i,\bp}(\bk) = -\frac{\norm{\bB_i}^2+\norm{\bk}^2}{2( A_i + \bB_i^\top \bk )}.
\end{align*}
Note that the Hessian of $J_{i,\bp}$ is given by
\begin{align*}
    &\nabla^2 J_{i,\bp}(\bk) = \frac{-\Gamma(\bk)}{ (A_i \!+\! \bB_i^\top \bk)^3 },
\end{align*}
where 
$\Gamma(\bk) = (A_i + \bB_i^\top \bk)^2 \mathbf{I}_n - ( \bk \bB_i^\top + \bB_i \bk^\top )( A_i + \bB_i^\top \bk) + ( \norm{\bB_i}^2 +\norm{\bk}^2)\bB_i \bB_i^\top$. 

$\bullet$ \underline{Proving that $\Gamma(\bk)$ is positive definite:}

Let us show that
$\bx^\top \Gamma(\bk) \bx > 0$ whenever $\bx \in \real^m \backslash \{ \mathbf{0}_m \}$.
This is true if and only if 
\begin{align*}
    &(A_i + \bB_i^\top \bk)^2 \norm{\bx}^2 - 2(A_i + \bB_i^\top \bk)(\bB_i^\top \bk)(\bk^\top \bx) + \\
    &(\norm{\bB}_i^2 + \norm{\bk}^2)(\bB_i^\top \bx)^2 > 0,
\end{align*}
for all $\bx\in\real^m\backslash \{ \mathbf{0}_m \}$.
Let $z = \bB_i^\top \bx$. We want to show that the scalar quadratic function in $z$ given by $(A_i + \bB_i^\top \bk)^2 \norm{\bx}^2 - 2(A_i + \bB_i^\top \bk) (\bk^\top \bx) z + (1+\norm{\bk}^2)z^2$ is strictly positive for all $\bx\in\real^m\backslash\{\mathbf{0}_m\}$. Note that the discriminant of the quadratic is 
\begin{align*}
    \Delta \! = \! 4(A_i \! + \! \bB_i^\top \bk)^2 \Big( 
    (\bk^\top \bx)^2 \! - \! \norm{\bk}^2 \norm{\bx}^2 \! - \! \norm{\bx}^2 \norm{\bB_i}^2 \Big)
\end{align*}
By the Cauchy-Schwartz inequality, $(\bk^\top \bx)^2 \leq \norm{\bk}^2 \norm{\bx}^2$. Therefore, 
$(\bk^\top \bx)^2 - \norm{\bk}^2 \norm{\bx}^2 - \norm{\bk}^2 \norm{\bB_i}^2 < 0$ unless $\bB_i = \mathbf{0}_m$ (in which case $(A_i + \bB_i^\top \bk)^2 \norm{\bx}^2 - 2(A_i + \bB_i^\top \bk) (\bk^\top \bx) z + (1+\norm{\bk}^2)z^2 > 0$ for $\bx\neq\mathbf{0}_n$) or $\bx = \mathbf{0}_m$.
This implies that $\Delta < 0$ unless $\bx = \mathbf{0}_m$ and therefore the quadratic $(A_i + \bB_i^\top \bk)^2 \norm{\bx}^2 - 2(A_i + \bB_i^\top \bk) (\bk^\top \bx) z + (1+\norm{\bk}^2)z^2$ is strictly positive for all $\bx\in\real^m\backslash\{ \mathbf{0}_m \}$.

$\bullet$ \underline{Proving that $J_{\bp}$ is strictly convex:}

Since $\Gamma$ is positive definite, this means that $\nabla^2 J_{i,\bp}(\bk)$ is positive definite
for all $\bk\in\Kc$, and $J_{i,\bp}$ is strictly convex in $\Kc$. Since $J_{\bp}(\bk) = \sum_{i=1}^N J_{i,\bp}(\bk)$, $J_{\bp}$ is the sum of strictly convex functions and is therefore also strictly convex in $\Kc_{\bp}$.
\end{proof}

Since $J_{\bp}$ is strictly convex in $\Kc_{\bp}$ by Proposition~\ref{prop:strict-convexity}, and $J_{\bp}$ goes to infinity as it approaches the boundary of $\Kc_{\bp}$, it follows that $J_{\bp}$
has a unique minimizer in $\Kc_{\bp}$. Let 
\begin{align*}
    \Pc = \setdef{ \bp\in\real^{N+mN} }{ \Kc_{\bp} \neq \emptyset }.
\end{align*}
Then, we let
$k^*:\Pc \to \real^m$ be the function mapping every tuple 
$\bp\in\Pc$ to the unique minimizer of $J_{\bp}$ in $\Kc_{\bp}$. 
The following result shows that the mapping $k^*$ is smooth.

\begin{lemma}\longthmtitle{Smoothness}\label{lem:smoothness}
    The function $k^*$ is $\Cc^{\infty}(\Pc)$.
\end{lemma}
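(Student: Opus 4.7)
The plan is to apply the Implicit Function Theorem (IFT) to the first-order optimality condition characterizing $k^*(\bp)$, treating $\bp$ as the parameter. The two main ingredients are already in place: $J_{\bp}$ is strictly convex (Proposition~\ref{prop:strict-convexity}), so its Hessian is positive definite and hence invertible; and $J_{\bp}$ is a rational function of $(\bp,\bk)$ with nonvanishing denominators on the appropriate open set, so it is jointly smooth.

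First I would verify that $\Pc$ is open and that for each $\bp\in\Pc$ the minimum is attained in the interior $\Kc_{\bp}$. Openness is immediate: given $\bp_0\in\Pc$ and any $\bk_0\in\Kc_{\bp_0}$, the inequalities $A_i+\bB_i^\top \bk_0<0$ persist for all $\bp$ in a neighborhood of $\bp_0$ by continuity. Existence of the minimizer follows from coercivity: as $\bk$ approaches $\partial\Kc_{\bp}$ some factor $A_i+\bB_i^\top \bk\to 0^-$ while the numerator $\norm{\bB_i}^2+\norm{\bk}^2$ stays bounded away from zero (using that $\bB_i=\mathbf{0}_m$ would force the factor to equal $A_i<0$, contradicting the limit), and as $\norm{\bk}\to\infty$ the quadratic growth in the numerators dominates the linear growth in the denominators. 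Strict convexity then gives a unique minimizer $k^*(\bp)$ lying in the open set $\Kc_{\bp}$, which consequently satisfies $\nabla_{\bk}J_{\bp}(k^*(\bp))=\mathbf{0}_m$.

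Next, I would define $F:\Dc\to\real^m$ by $F(\bp,\bk)=\nabla_{\bk}J_{\bp}(\bk)$ on the open domain $\Dc=\setdef{(\bp,\bk)\in\real^{N+mN}\times\real^m}{A_i+\bB_i^\top \bk<0,\ \forall i\in[N]}$. Because the denominators $A_i+\bB_i^\top \bk$ do not vanish on $\Dc$, $F$ is $\Cc^{\infty}$ in both arguments jointly. Its Jacobian with respect to $\bk$ is $\nabla_{\bk}^2 J_{\bp}(\bk)$, which by Proposition~\ref{prop:strict-convexity} is positive definite, hence invertible, at every $(\bp,\bk)\in\Dc$.

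Fixing $\bp_0\in\Pc$, I have $F(\bp_0,k^*(\bp_0))=\mathbf{0}_m$ with invertible $\bk$-Jacobian, so the IFT yields a neighborhood $U\subset\Pc$ of $\bp_0$ and a unique $\Cc^{\infty}$ map $\tilde k:U\to\real^m$ with $\tilde k(\bp_0)=k^*(\bp_0)$ and $F(\bp,\tilde k(\bp))=\mathbf{0}_m$ on $U$. Shrinking $U$ if necessary, continuity of $\tilde k$ and openness of $\Kc_{\bp}$ (jointly in $(\bp,\bk)$) keep $\tilde k(\bp)\in\Kc_{\bp}$, so $\tilde k(\bp)$ is a critical point of the strictly convex $J_{\bp}$ on $\Kc_{\bp}$ and therefore equals its unique minimizer $k^*(\bp)$. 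Since $\bp_0$ was arbitrary, $k^*\in\Cc^{\infty}(\Pc)$. The only real subtlety is confirming that the minimizer lies in the interior of $\Kc_{\bp}$ (the coercivity step above), since without this one cannot use the zero-gradient condition; after that, the result is a textbook application of the IFT.
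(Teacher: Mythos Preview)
Your proposal is correct and follows essentially the same route as the paper: establish that the unique minimizer lies in the open set $\Kc_{\bp}$ (via coercivity), then apply the Implicit Function Theorem to the first-order condition $\nabla_{\bk}J_{\bp}(\bk)=\mathbf{0}_m$, using the positive-definite Hessian from Proposition~\ref{prop:strict-convexity} for invertibility. Your treatment is in fact slightly more explicit than the paper's in identifying the IFT branch with $k^*$ and in handling the boundary behavior, but the underlying argument is the same.
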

\begin{proof}
    Since $\bp\in\Pc$, $\Kc_{\bp}\neq\emptyset$.
    First note that since $\lim\limits_{\norm{\bk}\to\infty}J_{\bp}(\bk) = \infty$, $k^*(\bp)$ is finite for all $\bp$.
    Since $J_{\bp}$ is strictly convex, continuous in $\Kc_{\bp}$ and $k^*(\bp)$ is its unique minimizer, 
    there exist neighborhoods $\Nc_1$, $\Nc_2$ of $k^*(\bp)$ and a constant $M>0$ such that
    if $\bk\in\Nc_1$ then $J_{\bp}(\bk) < M$, and 
    if $\bk\notin\Nc_2$ then $J_{\bp}(\bk) > 2M$.
    Now, since $J_{\bp}$ is continuous with respect to $\bp$, there exists a neighborhood $\Nc_{\bp}$ of $\bp$ such that for any $\bar{\bp}\in\Nc_{\bp}$, we have that if $\bk\in\Nc_1$ then $J_{\bar{\bp}}(\bk) < \frac{3M}{2}$, and if $\bk\notin\Nc_2$ then $J_{\bp}(\bk) > \frac{3M}{2}$.
    Hence, 
    $k^*(\bar{\bp})\in\Nc_1$ for all $\bar{\bp}\in\Nc_{\bp}$.
    Now, define the function 
    $J:\Nc_1\times\Nc_1 \to \real$ as $J(\bp,\bk) = J_{\bp}(\bk)$.
    Note that since $k^*$ is the minimizer of $J_{\bp}$, it satisfies the equation 
    $\frac{\partial J}{\partial \bk}(\bar{\bp},k^*(\bar{\bp})) = \mathbf{0}_m$ for all $\bar{\bp}\in\Nc_1$.
    Additionally, note that since $J_{\bp}$ is strictly convex as shown in Proposition~\ref{prop:strict-convexity}, $\frac{\partial^2 J}{\partial^2 \bk}(\bar{\bp},k^*(\bar{\bp}))$ is non-singular.
    Now, by the Implicit Function Theorem~\citep[Proposition 1B.5]{ALD-RTR:14}, since $J$ is $\Cc^{\infty}$, $k^*$ is also $\Cc^{\infty}$ at $\bp$. This argument is valid for any $\bp\in\Pc$.
\end{proof}

Therefore, $k^*$ is $\Cc^{\infty}$ and satisfies the constraints $A_i + \bB_i^\top k^* (\bp) < 0$ for all $i\in[N]$ and $\bp \in \Pc$. Define now the controller 
$u^*:\Fc\to\real^m$
by
\begin{align}\label{eq:controller}
 u^*( \bx ) = k^*( a_1(\bx), \hdots, a_N(\bx), b_1(\bx), \hdots, b_N(\bx) ) .
\end{align}
Note that $\bx \in \Fc$ implies that 
$( a_1(\bx), \hdots, a_N(\bx), b_1(\bx), \hdots, b_N(\bx) ) \in \Pc$. 
Therefore, 
$u^*$ is $\Cc^{l}(\Fc)$
and satisfies the constraints $a_i(\bx) + b_i(\bx)^\top u^*(\bx)$ for all $i\in[N]$ and $\bx \in \Fc$, solving Problem~\ref{problem:problem}.

\begin{remark}\longthmtitle{Connection with CLF universal formula}\label{rem:Sontag-formula}
    A noteworthy property of the controller $u^*$ is that when $N=1$, $m=1$, 
    and the constraint corresponds to a CLF constraint that is satisfied strictly, i.e., with $a_1(\bx) = \nabla V(\bx)^\top f(\bx) + W(\bx)$ and $b_1(\bx) = \nabla V(\bx)^\top g(\bx)$ for some CLF $V$ and positive definite function $W$,
    then $k^*$ equals the well-known universal formula~\citep{EDS:89a} for stabilization.
    Indeed, the minimizer of $J_{\bp}$ can be found by solving the nonlinear equation $\nabla J_{\bp}(k) = -\frac{k}{A_1 + B_1 k} + \frac{B_1^2 + k^2 }{ 2(A_1 + B_1 k)^2 }B_1 = 0$.   By solving the resulting quadratic equation, one obtains the CLF universal formula:
    \begin{align*}
        k^*(A_1,B_1) = \begin{cases}
            -\frac{A_1 + \sqrt{A_1^2 + B_1^4} }{B_1}, \ &\text{if} \ B_1 \neq 0, \\
            0, &\text{otherwise}. \qquad \quad \demo
        \end{cases}   
    \end{align*}
\end{remark}

\begin{remark}\longthmtitle{Dynamical controller}\label{rem:dynamical-controller}
    Instead of computing the minimizer $u^*$ at every state, one can choose to run the following dynamical controller:
    \begin{subequations}
    \begin{align}
        \dot{\bx} &= f(\bx) + g(\bx) \bu, \\
        \dot{\bu} &= -\tau \frac{\partial J}{\partial \bu}(\bx,\bu),
    \end{align}
    \label{eq:interconnection}
    \end{subequations}
    where $J(\bx,\bu) = J_{\bx}(\bu)$ and $\tau>0$ is a design parameter.
    Then, by using singular perturbation theory~\cite[Chapter 11]{HK:02}, one can show that for sufficiently large $\tau$, the evolution of the state variable $\bx$ according to~\eqref{eq:interconnection} can be made arbitrarily close to the evolution of the state variable for $\dot{\bx} = f(\bx) + g(\bx)u^*(\bx)$.
    \demo
\end{remark}

\begin{remark}\longthmtitle{Alternative universal formula}\label{rem:alternative-universal-formula}
    Given a vector of positive weights $\bw\in\real^N$ (i.e., with components $w_i > 0$ for all $i\in[N]$), one can instead consider a variation of the function $J_{\bp}$ where each of the summands is assigned different weights:
    \begin{align*}
        J_{\bp}^{\bw}(\bk) = -\sum_{i=1}^N w_i \frac{ \norm{\bB_i}^2 + \norm{\bk}^2 }{2(A_i + \bB_i^\top \bk)}.
    \end{align*}
    By following an argument analogous to the one in the proof of Proposition~\ref{prop:strict-convexity}, it follows that $J_{\bp}^{\bw}$ is strictly convex in $\Kc_{\bp}$ and therefore it has a unique minimizer in $\Kc_{\bp}$, which defines an alternative smooth function satisfying all the constraints.
    \demo
\end{remark}

The following result shows that for safe stabilization problems, even though $u^*$ is not defined at the origin, trajectories in its neighborhood converge to it.

\begin{proposition}\longthmtitle{Convergence to the origin}
    Let $V:\real^n\to\real$ be a CLF, $h:\real^n\to\real$ be a CBF of a safe set $\Cc\subset\real^n$. Let $N=2$ and define $a_1(\bx) = \nabla V(\bx)^\top f(\bx) + W(\bx)$, $b_1(\bx) = g(\bx)^\top \nabla V(\bx)$,
    $a_2(\bx) = -\nabla h(\bx)^\top f(\bx) - \alpha h(\bx)$, $b_2(\bx) = -g(\bx)^\top \nabla h(\bx)$.
    Suppose that there exists $\bar{k} > 0$ such that 
    \begin{align*}
        \mathcal{O}_{\bar{k}} = \setdef{\bx\in\real^n}{0 < V(\bx) < \bar{k} } \subset \Fc.
    \end{align*}
    Let $\bx_0\in\mathcal{O}_{\bar{k}}$ with $V(\bx_0) = k \leq \bar{k}$ and consider the maximal solution of $\dot{\bx} = f(\bx)+g(\bx)u^*(\bx)$ in $\Fc$ with initial condition at $\bx_0$. 
    We let $\bx(\cdot;\bx_0)$ be such solution and $[0,T)$ its interval of existence (with either $T = \infty$ or $T < \infty$).
    Then, we have
    \begin{align*}
        \lim\limits_{t\to T} \bx(t;\bx_0) = \mathbf{0}_n.
    \end{align*}
\end{proposition}
\begin{proof}
    Suppose first that there exists a compact subset $C \subset \Fc$ such that $x(t)\in C$ for all $t\in[0,T)$.
    Then, by~\cite[Proposition C.3.6]{EDS:98}, it follows that $T=\infty$.
    Since $\bx(\cdot;\bx_0)$ is precompact, the omega limit set of $\bx_0$, denoted as $\omega^{+}(\bx_0)$ is non-empty.
    Since $V$ is a LaSalle function in $\mathcal{O}_{\bar{k}}$, for all $\boldsymbol{\xi}\in\omega^{+}(\bx_0)$ we have 
    $\dot{V}(\boldsymbol{\xi}) = 0$.
    However, since $\boldsymbol{\xi}\in\omega^{+}(\bx_0)\in\mathcal{O}_{\bar{k}}$, we have $\dot{V}(\boldsymbol{\xi}) < 0$, reaching a contradiction.
    Therefore, there exists no such set $C$.
    Let $\epsilon > 0$ and consider the set $C_{\epsilon,k} = \setdef{\bx\in\real^n}
    {\epsilon \leq V(\bx) \leq k}$.
    Since $C_{\epsilon,k}$ is a compact set contained in $\Fc$, there exists $t_1 \in (0, T)$ such that $\bx(t_1;\bx_0)\notin C_{\epsilon,k}$.
    Let $t_0 = \max_s \setdef{s \leq t_1 }{ \bx(s;\bx_0)\in C_{\epsilon,k} \ \forall s\leq t }$.
    First note that $V( \bx(t_0;\bx_0 ) ) = k$ is impossible.
    Indeed, for such a $t_0$ we have $\dot{V}( \bx(t_0;\bx_0) ) < 0$, so 
    $V( \bx(t;\bx_0) ) > V( \bx(t_1;\bx_0) )$ for all $t\in[t_0 - \bar{\epsilon}, t_0]$ for some small enough $\bar{\epsilon}>0$ (by continuity of $\dot{V}$), and therefore $V( \bx(t;\bx_0) ) > k$ for $t\in[t_0 - \bar{\epsilon}, t_0]$, which contradicts the definition of $t_0$.
    Since $V( \bx(t_0;\bx_0 ) ) = k$ is impossible, we necessarily have $V(\bx(t_0;\bx_0)) = \epsilon$. Since 
    $\dot{V}( \bx(t;\bx_0) ) < 0$ for all $t\in[0,T)$, we conclude that $V(\bx(t;\bx_0)) < \epsilon$ for all $t\in(t_0,T)$.
    Since $\epsilon$ is arbitrary, 
    we necessarily have $\lim\limits_{t\to T} \bx(t;\bx_0) = \mathbf{0}_n$.
\end{proof}

Moving beyond the case of one constraint discussed in Remark~\ref{rem:Sontag-formula}, the controller $u^*$ is not available in closed form if multiple constraints are present, and therefore its implementation requires solving a minimization problem at every $\bx$. 
Although $J_{\bp}$ is strictly convex and therefore $u^*(\bx)$ can be found by using off-the-shelf convex solvers, 
this computation needs to be done at every state in continuous time, which is not possible in practice. 
Instead, one often implements a
sample-and-hold version of the controller at a sufficiently high frequency. However, computing the minimizers at such high frequencies becomes computationally burdensome and puts into question the validity of the convergence guarantees. This motivates our ensuing discussion.

\section{Neural Network Approximation of the Universal Formula}\label{sec:neural-network-approximation}

In this section, we present an approach to avoid the computational burden of solving a minimization problem at every state to compute the controller $u^*$ defined in~\eqref{eq:controller}. Our approach is based on computing an approximation of the controller $k^*$ using a NN.
This NN needs to be trained with input-output pairs of the form $(\bp, k^*(\bp))$, with $\bp\in\Pc\subset\real^{N+mN}$.
However, this presents a challenge because the set $\Pc$ is potentially unbounded, which means that the set of possible inputs $\bp$ to the NN is unbounded.
The following result resolves this issue by showing that the values of $J_{\bp}$ can be inferred by only looking at a bounded subset of $\real^{N+mN}$.

\begin{lemma}\longthmtitle{Scaling property of cost function}\label{lem:scaling-property-cost-function}
    Let
    $\tilde{\bp} = (\tilde{A}_1,\hdots,\tilde{A}_N,\tilde{\bB}_1,\hdots,\tilde{\bB}_N)\in\Pc$,
    $\bq = (\tilde{A}_1,\hdots,\tilde{A}_N,\tilde{\bB}_1,\hdots,\tilde{\bB}_N,r)\in\Pc\times[0,1]$ and define the function $\tilde{J}_{\bq}:\Kc_{\tilde{\bp}}\to\real$
    as
    \begin{align*}
        \tilde{J}_{\bq}(\bk) = -\sum_{i=1}^N \frac{\norm{\tilde{\bB}_i}^2 + r\norm{\bk}^2 }{ 2(\tilde{A}_i + \tilde{\bB}_i^\top \bk) }.
    \end{align*}
    Then, $\tilde{J}_{\bq}$ is strictly convex in $\Kc_{\tilde{\bp}}$.  Furthermore, given $\bp = (A_1,\hdots,A_N,\bB_1,\hdots,\bB_N)$, let
    \begin{align*}
        \bq (\bp) \! = \! \Big(\frac{\bp}{M},\frac{1}{M^2} \Big) \! = \! \Big( \frac{A_1}{M},\hdots,\! \frac{A_N}{M},\! \frac{\bB_1}{M},\hdots,\! \frac{\bB_N}{M},\! \frac{1}{M^2} \Big),
    \end{align*}
    where $M = \max\{ |A_1|,\hdots,|A_N|,|\norm{\bB_1},\hdots,\norm{\bB_N}, 1 \}$.     
    Then $J_{\bp}(\bk) = \tilde{J}_{ \bq(\bp) }(\bk)$ for all $\bk\in\Kc_{\bp}$.
   As a consequence, if $\tilde{k}^*:\Pc\times[0,1]\to\real^m$ denotes the function that maps each $\bq\in\Pc\times[0,1]$ to the minimizer of $\tilde{J}_{\bq}$, we have $k^*(\bp) = \tilde{k}^*(\bq(\bp))$ for all $\bp\in\Kc_{\bp}$.
\end{lemma}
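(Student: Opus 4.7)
The plan is to establish the three claims in order: strict convexity of $\tilde{J}_{\bq}$, the pointwise identity between $J_{\bp}$ and $\tilde{J}_{\bq(\bp)}$ on $\Kc_{\bp}$, and the minimizer equality $k^*(\bp)=\tilde{k}^*(\bq(\bp))$. The first step mirrors the Hessian computation of Proposition~\ref{prop:strict-convexity} with the extra parameter $r$ carried through; the other two reduce to a direct algebraic substitution.

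For strict convexity, I would differentiate each summand $\tilde{J}_{i,\bq}(\bk)=-(\norm{\tilde{\bB}_i}^2+r\norm{\bk}^2)/(2h_i(\bk))$ twice, where $h_i(\bk)=\tilde{A}_i+\tilde{\bB}_i^\top\bk<0$. The resulting per-summand Hessian has the same structure as in Proposition~\ref{prop:strict-convexity} except that the two terms not involving $\tilde{\bB}_i\tilde{\bB}_i^\top$ acquire an extra factor of $r$. I would then verify positive-definiteness of the bracketed matrix by the same reduction as before: set $z=\tilde{\bB}_i^\top\bx$, observe that $\bx^\top\Gamma\bx$ becomes a scalar quadratic in $z$ whose discriminant factors as $4rh_i^2\bigl(r(\bk^\top\bx)^2-(\norm{\tilde{\bB}_i}^2+r\norm{\bk}^2)\norm{\bx}^2\bigr)$, and show by Cauchy--Schwarz that this discriminant is strictly negative for $\bx\neq\mathbf{0}_m$ whenever $r>0$. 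Summing over $i$ and using $-1/h_i^3>0$ yields strict convexity in the regime $r\in(0,1]$, which is all that is needed since the map $\bp\mapsto\bq(\bp)$ produces $r=1/M^2$ with $M\geq 1$.

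For the scaling identity, I would substitute $\tilde{A}_i=A_i/M$, $\tilde{\bB}_i=\bB_i/M$, and $r=1/M^2$ directly into $\tilde{J}_{\bq(\bp)}(\bk)$. Each summand's numerator becomes $(\norm{\bB_i}^2+\norm{\bk}^2)/M^2$ and its denominator becomes $2(A_i+\bB_i^\top\bk)/M$. Since the positive scaling $\bp\mapsto\bp/M$ preserves the sign of each $A_i+\bB_i^\top\bk$, the feasible sets coincide ($\Kc_{\bp}=\Kc_{\tilde{\bp}}$), and on this common domain $\tilde{J}_{\bq(\bp)}$ agrees with $J_{\bp}$ up to the strictly positive, $\bk$-independent scalar $1/M$. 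Because such a scaling leaves the unique minimizer unchanged, the minimizer equality $k^*(\bp)=\tilde{k}^*(\bq(\bp))$ follows.

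The main subtlety I anticipate is the $r=0$ endpoint of the interval $[0,1]$ quoted in the statement: in that case the leading coefficient of the auxiliary scalar quadratic vanishes and the Cauchy--Schwarz/discriminant argument yields only positive semi-definiteness of the Hessian unless the collection $\{\tilde{\bB}_i\}_{i\in[N]}$ happens to span $\real^m$. Since the construction $\bq(\bp)=(\bp/M,1/M^2)$ always lies in $\Pc\times(0,1]$, this boundary case does not affect the downstream neural-network approximation, but may deserve either an explicit non-degeneracy caveat or a restriction to $r\in(0,1]$ in the statement of the lemma.
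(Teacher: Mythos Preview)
Your proposal is correct and follows essentially the same route as the paper: the paper simply says strict convexity ``follows an argument analogous to that of Proposition~\ref{prop:strict-convexity}'' and then performs the direct substitution $\tilde A_i=A_i/M$, $\tilde\bB_i=\bB_i/M$, $r=1/M^2$ to compare the two functions, concluding that the minimizers agree.

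You are in fact more careful than the paper on two points. First, you correctly observe that the substitution yields $\tilde J_{\bq(\bp)}(\bk)=\tfrac{1}{M}J_{\bp}(\bk)$ rather than literal equality; the paper's displayed chain ends with ``$=J_{\bp}(\bk)$'', which is a slip, though its conclusion about minimizers is unaffected and matches yours. Second, your caveat about $r=0$ is well taken: the Hessian argument indeed only yields positive semi-definiteness there, and the paper does not flag this. Since $\bq(\bp)$ always has $r=1/M^2\in(0,1]$, this does not affect the downstream use of the lemma, exactly as you note.
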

\begin{proof}
   The proof that $\tilde{J}_{\bq}$ is strictly convex follows an argument analogous to that of Proposition~\ref{prop:strict-convexity}. Given $\bp = (A_1,\hdots,A_N,\bB_1,\hdots,\bB_N)$, note that 
    \begin{align*}
        \tilde{J}_{\bq (\bp)} (\bk) &= -M \sum_{i=1}^N \frac{ \frac{\norm{\bB_i}^2}{M^2} + \frac{ \norm{\bk}^2 }{ M^2 }  }{ 2( A_i + \bB_i^\top \bk ) } \\
        &=-\frac{1}{M} \sum_{i=1}^N \frac{ \norm{\bB_i}^2 + \norm{\bk}^2  }{ 2( A_i + \bB_i^\top \bk ) } = J_{\bp}(\bk).
    \end{align*}
    Therefore, the minimizers of $\tilde{J}_{\tilde{\bp}}$ and $J_{\bp}$ are the same.
\end{proof}

Lemma~\ref{lem:scaling-property-cost-function} shows that by approximating $\tilde{k}^*$ with values of $\bq$ in $\Tc=\left([-1,1]^N \times \Bc_1(\mathbf{0}_m)^N \right) \cap\Pc \times [0,1]$, we can recover any value of $k^*$.
Therefore, our approach consists in approximating the minimizer of $\tilde{J}_{\bq}$ with a NN for $\bq$ in the set $\Tc$.
Since $\Pc$ is an open set, $\Tc$ is not compact.
Given that the universal approximation theorem of neural networks~\citep{GC:89} is only valid in compact sets,
it does not apply to $\mathcal{T}$.
However, for any compact subset of $\mathcal{T}$ (which we can take to be arbitrarily close to $\mathcal{T}$), the universal approximation theorem ensures that a NN with a sufficiently large number of parameters can approximate $k^*$  arbitrarily well.
In particular, if we are interested in the trajectories evolving on a compact set $\mathcal{X}\subset\Fc$, we only require having a good approximation of $\tilde{k}^*$ in the compact set $\bq( \{ a_i(\mathcal{X}) \}_{i=1}^N, \{ b_i(\mathcal{X}) \}_{i=1}^N )$.
Note also that by choosing the activation functions of the NN to be smooth, the resulting approximation of $k^*$ is $\Cc^{\infty}(\Pc)$, and therefore the approximation of the controller $u^*$ is $\Cc^{l}(\Pc)$.

\begin{remark}\longthmtitle{Applicability for multiple controllers}\label{rem:applicability-multiple-controllers}
    Note that once we have obtained an approximation of $k^*$, we can use it in many different instantiations of the functions $a_1, \hdots, a_N, b_1, \hdots, b_N$, i.e., in a variety of different control problems.
    The NN only depends on the number of constraints $N$ and the dimension of the input $m$, but remarkably, does not depend on $n$, the dimension of the state.
    In fact, given a NN trained with a given input dimension $m$ and number of constraints $N$, all control problems with input dimension at most $m$ and at most $N$ constraints can be solved with the same NN (in case the input dimension is strictly less than $m$ or the number of constraints is strictly less than $N$ one can simply assign the coefficients of the additional inputs or constraints so that they are trivially satisfied).
    In particular, this implies the remarkable fact that all safe stabilization problems for systems with a given input dimension can be solved with the same NN approximation.
    \demo
\end{remark}

\begin{remark}\longthmtitle{Robustness}\label{rem:robustness}
    If one of the inequalities corresponds to a CLF inequality~\eqref{eq:clf-ineq}, and the corresponding CLF is an input-to-state stable (ISS) Lyapunov function for the system~\eqref{eq:control-affine-system}
    (cf.~\cite[Section 3.3]{EDS:08}),
    then if we let $\hat{k}$ be an approximated version of the controller $k^*$ (like the one obtained by approximating it with a NN), 
    and the bound 
    $\normb{ \hat{k}(\bx) - k^*(\bx) } \leq \sigma$ holds for all $\bx\in\mathcal{X}$, with $\mathcal{X}$ some compact set,
    we have
    \begin{align*}
        &\nabla V(\bx)^\top (f(\bx)+g(\bx)\hat{k}(\bx)) \\
        &= \nabla V(\bx)^\top (f(\bx)+g(\bx)k^*(\bx)) + \gamma(\sigma),
    \end{align*}
    for some class $\Kc_{\infty}$ function $\gamma$.
    A similar result holds if the inequality is a CBF inequality~\eqref{eq:cbf-ineq} and the CBF defining it is an input-to-state safe (ISSf) barrier function for the system~\eqref{eq:control-affine-system}~\cite[Theorem 1]{SK-ADA:18}.
    Therefore, CLF and CBF inequalities are robust to approximation errors induced by the NN and therefore the NN-based controller, when applied to safety and stability tasks, makes the closed-loop system close to safe and stable, respectively.
    We would also like to point out that under the assumption that the nominal vector field $f(\bx) + g(\bx)k^*(\bx)$ is Lipschitz, the difference between the trajectories of the closed-loop under $k^*$ and the closed-loop under its NN approximation $\hat{k}$ can be upper bounded in terms of the magnitude of the approximation error $\sigma$~\cite[Theorem 3.4]{HK:02}. In the case where the nominal vector field $f(\bx) + g(\bx)k^*(\bx)$ is contracting, even stronger error bounds can be derived~\cite[Corollary 3.17]{FB-CTDS}.
    \demo
\end{remark}

\begin{remark}\longthmtitle{Hard-constrained NN}\label{rem:hard-constrained-nn}
    We should also point out that the recently introduced HardNet~\citep{YM-NA:25} and $\Pi$-net~\cite{PDG-AT-ECB-RD-JL:25} methods show that by including a projection step at the output layer of the NN, the NN predictions can be guaranteed to satisfy a set of affine constraints.
    Hence, exact stability and safety guarantees (as opposed to ISS/ISSf guarantees, cf. Remark~\ref{rem:robustness}) can be achieved by utilizing these methods.
    Although the universal approximation guarantees are retained, the projection operation in the last layer makes the controller obtained from it locally Lipschitz but not $\Cc^{l}(\Fc)$, for $l\geq 1$. Hence, the existing NN approximation methods under hard constraints impose a trade-off between smoothness and constraint satisfaction. In general, the problem of devising NN architectures that produce smooth approximations, have universal approximation guarantees, and satisfy hard constraints is an interesting line of future research, highly relevant to our work here.
    \demo
\end{remark}

\begin{remark}\longthmtitle{Computational Complexity}\label{rem:computational-complexity}
    Here we discuss how our approach scales with respect to the control dimension $m$ and the number of constraints $N$.
    As shown in~\cite{DY:17}, the number of NN parameters needed to provide an approximation of order $\epsilon>0$ for the function $k^*$ (which has dimension $N(m+1)+1$ is $\mathcal{O}(\epsilon^{-Nm})$).
    In terms of inference time, the complexity of executing a NN-based controller is proportional to the number of parameters of the NN, but is much faster in practice thanks to GPU parallelization. However, as mentioned in Remark~\ref{rem:applicability-multiple-controllers}, the advantage of the NN-based controller is that once it has been obtained for a given $m$ and $N$, all tasks with up to $m$ inputs and $N$ constraints can be solved with the same NN.
    In comparison, the CLF-CBF-QP~\cite{ADA-XX-JWG-PT:17}, requires computing the solution of the QP in real time.
    As shown in~\cite{SJW:97}, 
    the complexity (measured as the worst-case number of floating point operations) of solving the QP using an interior-point method is of order $\mathcal{O}( (m+N)^3 \sqrt{m} )$.
    In practice, since the number of NN parameters is smaller than $\mathcal{O}((m+N)^3 \sqrt{m})$, this makes the NN-based controller computationally cheaper.
    \demo
\end{remark}

\section{Simulations}\label{sec:simulations}

In this section we illustrate our approach in different simulation examples.
We train NNs to approximate $k^*$ as detailed in Section~\ref{sec:neural-network-approximation}~\footnote{
The code and additional details of our simulation can be found at \url{https://github.com/Bluernat/Neural-Network-based-Universal-Formulas-for-Control}.}.

We focus on the cases $m=N=2$ and $m=N=10$, and we train a different NN for each case.

For the case $m=N=2$, we use a feedforward NN with 4 layers (with input dimension $7$ and output dimension $2$), each with 64 neurons, and we use the Sigmoid Linear Unit (SiLU)
activation function $\phi:\real\to\real$ given by $\phi(s) = \frac{s}{1+\exp\{ -s \} }$~\citep{SE-EU-KD:18}, which is a smooth approximation of the ReLU activation function.
The NN is trained with the Adam optimizer with learning rate $3 \times 10^{-3}$ for $2000$ epochs using a dataset with $25113$ data points. 
Each data point $\bq$
consists of a uniformly randomly sampled point in $\left( [-1,1]\times\Bc_1(\mathbf{0}_2)\times[-1, 1]\times\Bc_1(\mathbf{0}_2)\times[0,1] \right) \cap \Pc \subset \real^7$ 
(to do so, we first uniformly sample a point from $[-1,1]\times\Bc_1(\mathbf{0}_2)\times[-1, 1]\times\Bc_1(\mathbf{0}_2)\times[0,1]$ and then check whether the corresponding 2 inequalities are feasible with the QP solver of the \texttt{cvxpy}
library in Python~\citep{SD-SB:16}), 
along with the corresponding minimizer of the function $\tilde{J}_{\bq}$.
We compute such minimizer 
up to a tolerance of $10^{-6}$ by numerically integrating the gradient flow 
$\dot{\bk} = -\nabla \tilde{J}_{\bq}(\bk)$ 
using Python's \texttt{solve\_ivp} function in the \texttt{SciPy} library~\citep{PV-RG-TEO:20}.
We also train the same NN architecture with the HardNet method from~\citep{YM-NA:25}, which includes a projection step at the output layer and guarantees that the NN predictions satisfy the two affine constraints (cf. Remark~\ref{rem:hard-constrained-nn}).

For the case $m=N=10$, we use a residual NN with 6 layers (with input dimension $111$ and output dimension $10$), each with $256$ neurons (except for the last one, that has 128), and also SiLU activation function. We follow an analogous process to the case $m=N=2$ to generate the training dataset, and also train the same NN architecture with the HardNet method.
All simulations were run in an Ubuntu PC with GPU P100.

\begin{example}\longthmtitle{Safe stabilization of single-integrator system}\label{ex:safe-stabilization-single-integrator}
    Consider a single integrator in~$\real^n$,
    \begin{align}\label{eq:single-integrator}
        \dot{\bx} = \bu.
    \end{align}
    Suppose that our goal is to design a controller that stabilizes the system to the origin, and stays in the safe set
    $\Cc = \setdef{\bx\in\real^n}{h_i(\bx) = \norm{ \bx-\bc_i }^2 - r_i^2 \geq 0, \ i\in[\bar{n}]}$, where $\bar{n}\in\mathbb{N}$ is the number of obstacles.
    Note that $V(\bx) = \frac{1}{2}\norm{\bx}^2$ is a CLF for~\eqref{eq:single-integrator}. 
    We further take the positive definite function $W$ in~\eqref{eq:clf-ineq} to be $W(\bx) = 0.1 \norm{\bx}^2$.
    First, we consider the case $n = m = 2$ and $\bar{n} = 3$, with $\bc_1 = (0, 2.5)$, $r_1 = 1$, $\bc_2 = (-2,-2)$, $r_2 = 1$, $\bc_3 = (2, -2)$, $r_3 = 1$.
    We let $h(\bx) = h_1(\bx)h_2(\bx)h_3(\bx)$, which can be shown to be a CBF of $\Cc$.
    We consider the extended class $\Kc_{\infty}$ function $\alpha$ in~\eqref{eq:cbf-ineq} to be $\alpha(s) = s$.
    Next, we take $a_1(\bx) = 0.1\norm{\bx}^2$, $b_1(\bx) = \bx$, $a_2(\bx) = -h(\bx)$, and $b_2(\bx) = -\nabla h(\bx)$.
    By using~\citep[Lemma 5.2]{PM-JC:23-csl}, we can show that the inequalities $a_1(\bx) + b_1(\bx)^\top \bu < 0$, $a_2(\bx) + b_2(\bx)^\top \bu < 0$ are simultaneously feasible at all points in $\Cc$ where $\bx$ and $\nabla h(\bx)$ are linearly independent.

    In this setting, we can use the pretrained NN with $m = N = 2$.
    Figure~\ref{fig:ballobstacle} compares the trajectories obtained by directly applying the NN-based controller in closed-loop and using it to warmstart an optimization scheme to compute $u^*$ online.
    We observe that using the NN-based controller induces trajectories that are safe and converge to a small neighborhood around the origin, for the chosen initial conditions.
    Instead, the trajectories that use a controller obtained by numerically computing $u^*$ online with the NN controller as a warmstart asymptotically converge to the origin (instead of a neighborhood of it) and are also safe.
    The same is true for the controllers obtained from the HardNet method, the CLF-CBF QP, or the computation of $u^*$ online with the CLF-CBF QP as a warmstart.

    \begin{figure}[tbh]
    \includegraphics[width = 0.23\textwidth]{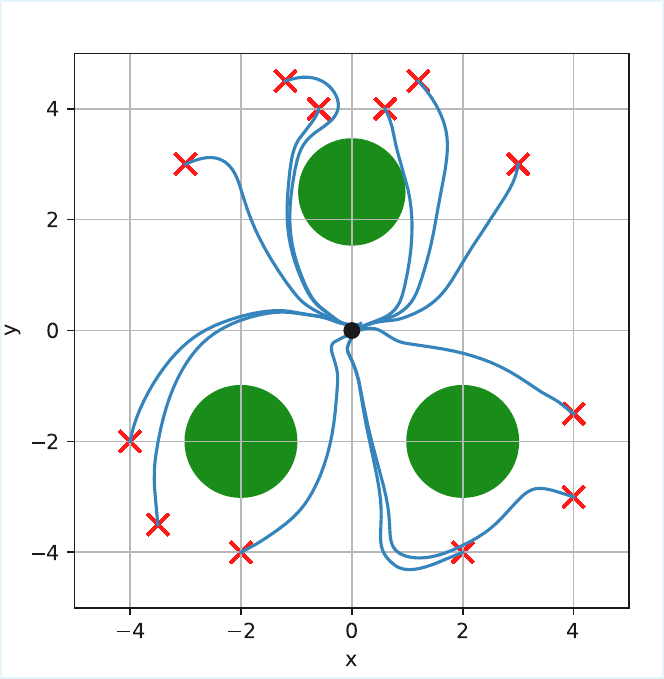}
    \includegraphics[width = 0.23\textwidth]{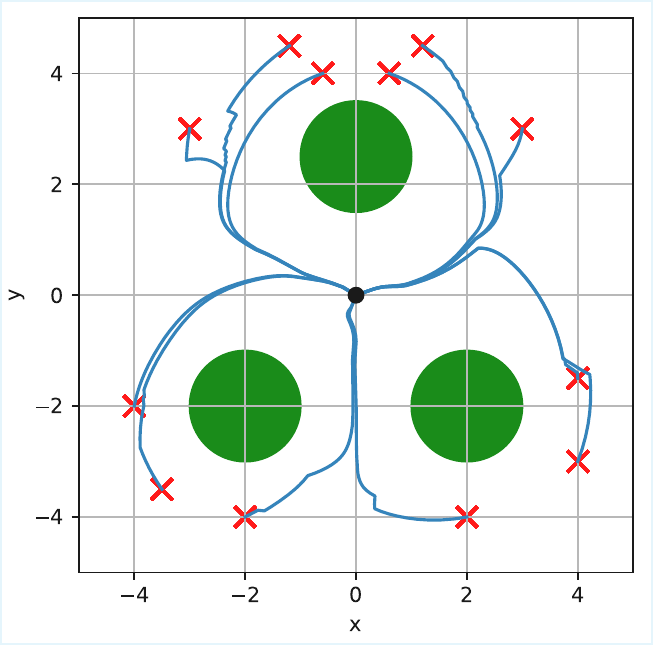}
    \caption{(left) Trajectories of the closed-loop system obtained from the neural network based controller for Example~\ref{ex:safe-stabilization-single-integrator}. (right)
    Trajectories of the closed-loop system obtained from numerically finding the controller $u^*$ online and warmstarting the solver with the NN-based controller for Example~\ref{ex:safe-stabilization-single-integrator}.
    Initial conditions are denoted by red crosses, the origin is the black dot, and the green region denotes the unsafe set.}
    \label{fig:ballobstacle}
    \vspace{-2ex}
    \end{figure}

    Next, we consider the case $n = m = 10$ and $\bar{n} = 9$, with $r_i = 0.8$ for $i\in[9]$ and $\{ \bc_i \}_{i=1}^9$ located at $[-2.5, 2.5]^{10}$. In this case, we consider CBFs $\tilde{h}_i(\bx) = 8(1 - \frac{r_i^2}{\norm{\bx-\bc_i}^2 })$, and consider the extended class $\Kc_{\infty}$ function $\alpha$ in~\eqref{eq:cbf-ineq} to be $\alpha(s) = s$. We further define $a_i(\bx) = -\tilde{h}_i(\bx)$, $b_i(\bx) = -\nabla \tilde{h}_i(\bx)$ for $i\in[9]$ and $a_{10}(\bx) = 0.1\norm{\bx}^2$, $b_{10}(\bx) = \bx$ (although as mentioned in Remark~\ref{rem:applicability-multiple-controllers}, the same trained NN can be used for any other set of obstacles defined through parameters $\{ r_i, \bc_i \}_{i=1}^{9}$
    (as well as any other system with $m=10$, $N=10$)). 
    Note that $V(\bx) = \frac{1}{2}\norm{\bx}^2$ is a CLF and 
    $\tilde{h}_i(\bx) = 8(1 - \frac{r_i^2}{ \norm{\bx-\bc_i}^2 })$
    is a CBF for $i\in[9]$.
    By using an argument analogous to that of~\citep[Lemma 5.2]{PM-JC:23-csl}, the inequalities $\{ a_i(\bx) + b_i(\bx)^\top \bu < 0 \}_{i=1}^{10}$, are simultaneously feasible at all points where the vectors $\{ \bx-\bc_i \}_{i=1}^9$ and $\bx$ are linearly independent.
    Hence, the set of points where such inequalities are infeasible has measure zero.
    
    In order to improve prediction accuracy, we finetune the pretrained NN (with 40000 samples and only adjusting the weights in the last NN layer) with the given system dynamics, CLF and CBFs.
    
    Given that in this case the dimensionality of the system is too high for the trajectories to be visualized, in Figure~\ref{fig:nn-cbf-clf-evolution} we plot the evolution of the minimum value among all CBFs, as well as the evolution of the CLF under the NN-based controller. Even though this controller does not have safety or stability guarantees, the plots show that safety is satisfied in practice, whereas stability holds for a neighborhood of the origin.
    We also note that although $u^*$ is guaranteed to decrease the CLF at every point, its NN approximation can result in a slight increase of the CLF value, specially for small values of the CLF, as seen in Figure~\ref{fig:nn-cbf-clf-evolution} (bottom).
    \begin{figure}
        \centering
        \includegraphics[width=0.99\linewidth]{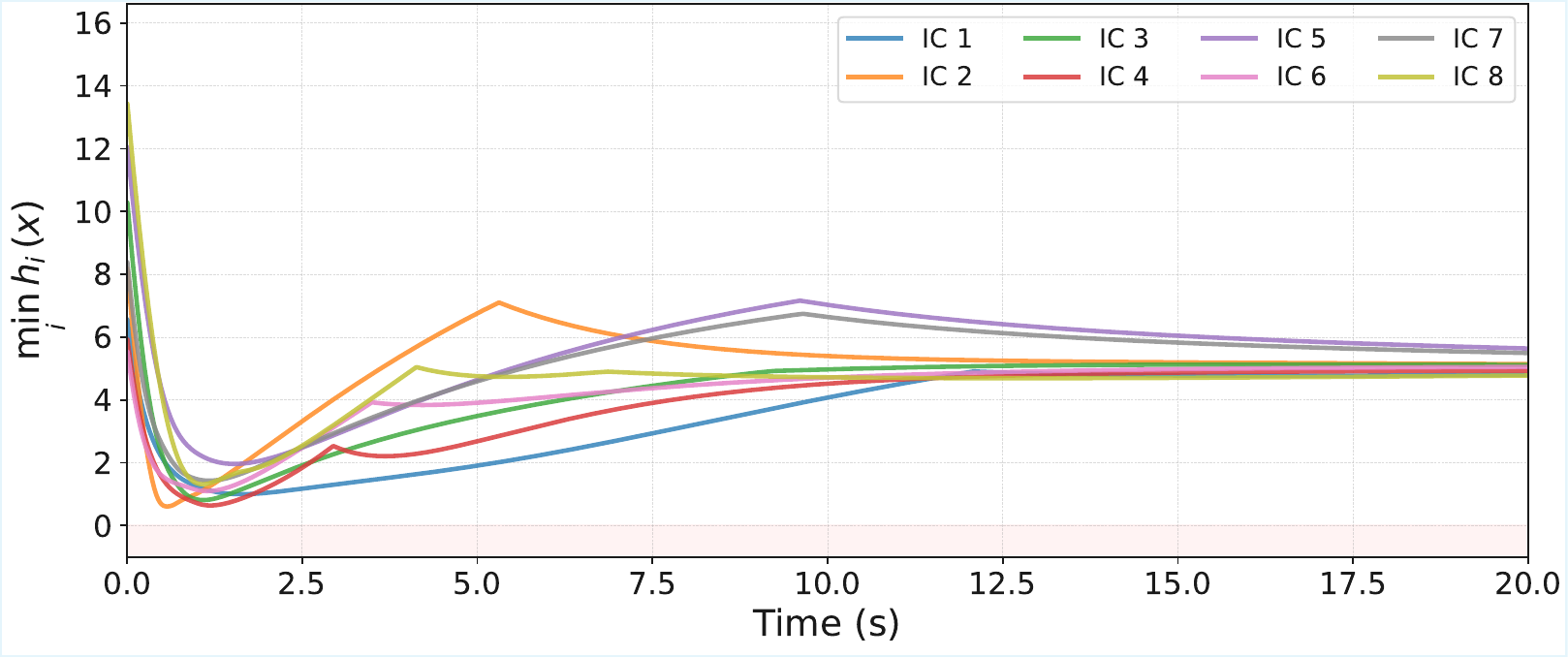}
        \includegraphics[width=0.99\linewidth]{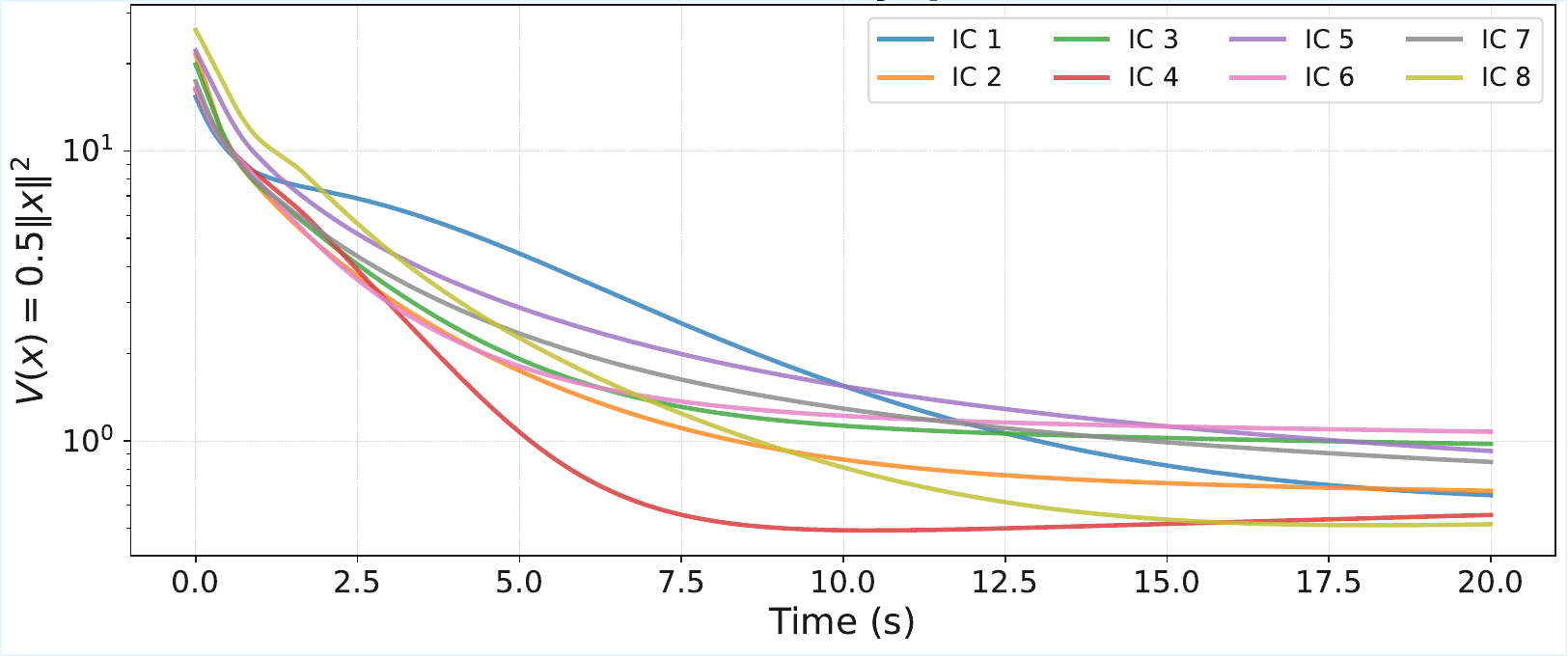}
        
        \caption{ (top) Evolution of $\min\limits_{i\in[9]} h_i(\bx)$ along different trajectories induced by the NN-based controller from different initial conditions (IC). (bottom) Evolution of $V(\bx)$ along trajectories induced by the NN-based controller.}
        \label{fig:nn-cbf-clf-evolution}
    \end{figure}
    On the other hand, the same plots for trajectories obtained with the HardNet-based controller (cf. Figure~\ref{fig:hardnet-cbf-clf-evolution}) show that in addition to safety, the CLF is monotonically decreasing for all times.
    \demo
    
    \begin{figure}
        \centering
        \includegraphics[width=0.99\linewidth]{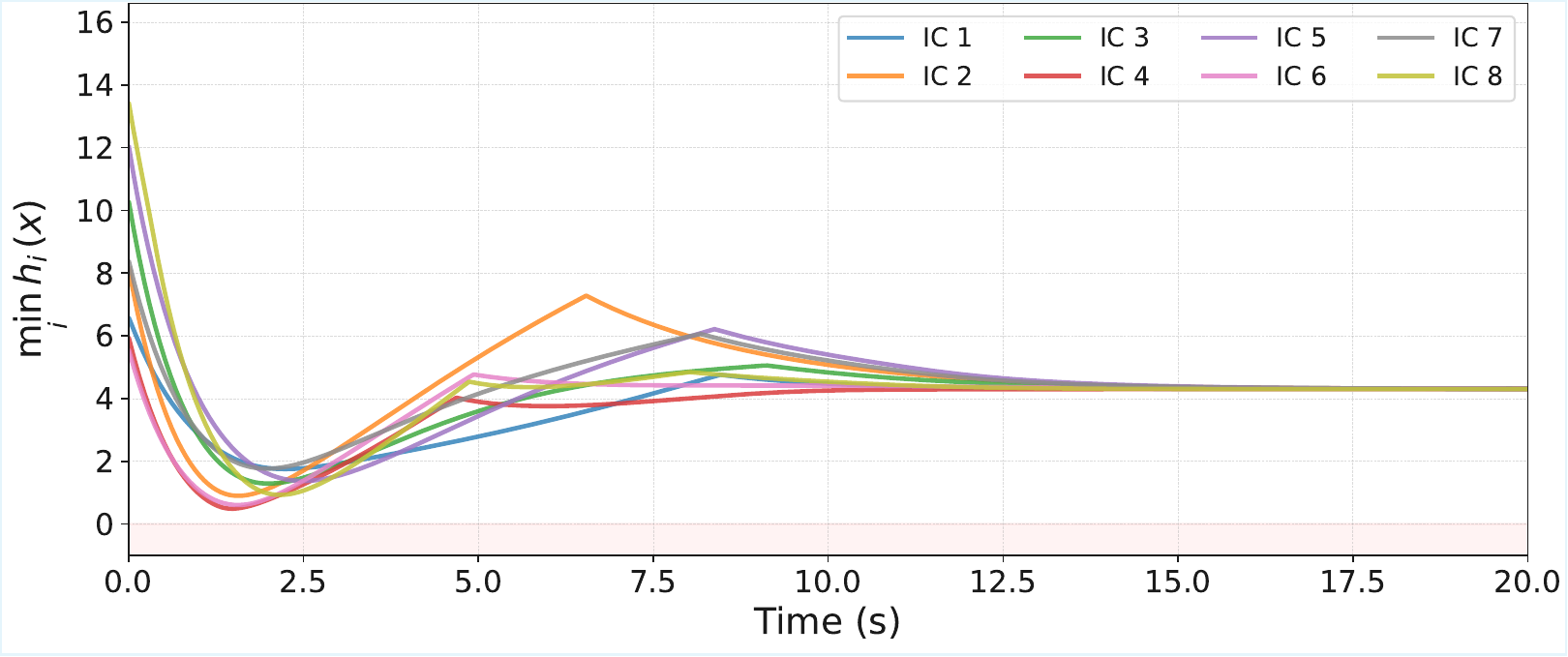}
        \includegraphics[width=0.99\linewidth]{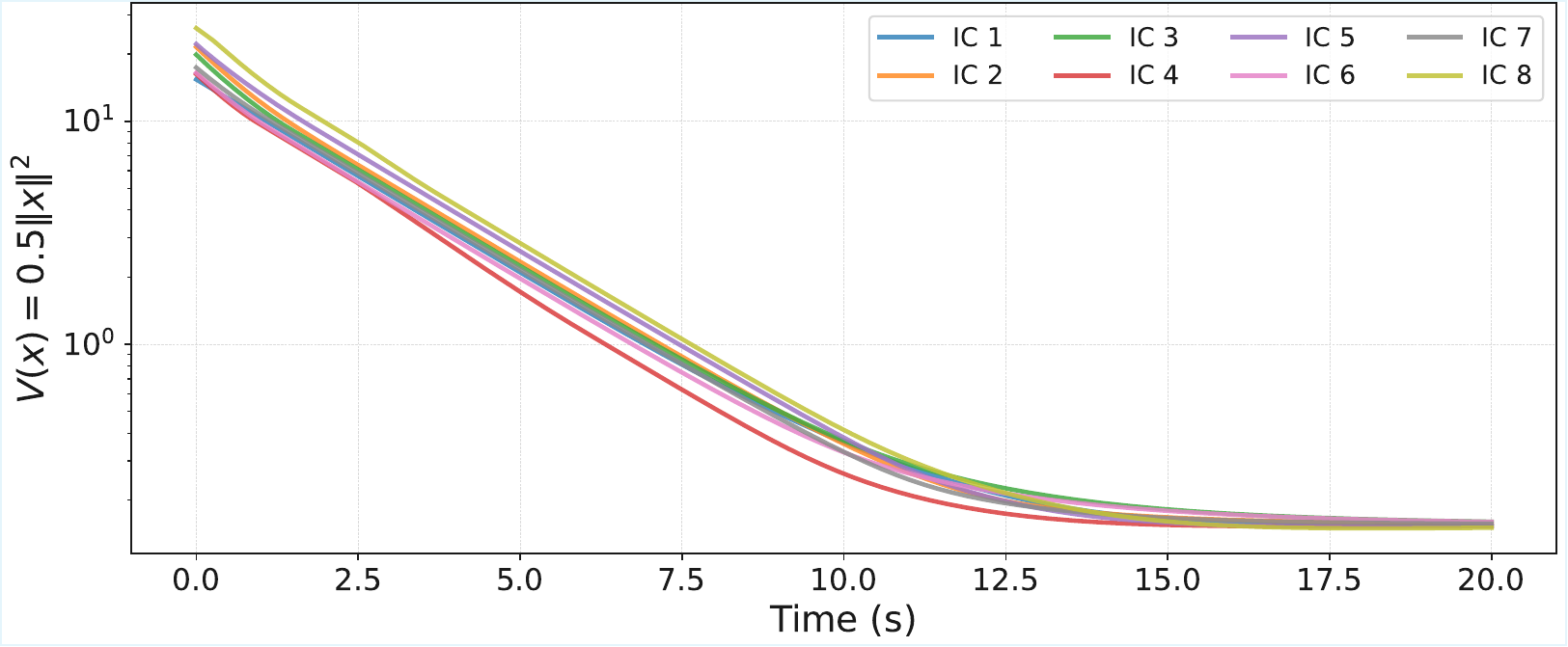}
        
        \caption{ (top) Evolution of $\min\limits_{i\in[9]} h_i(\bx)$ along trajectories induced by the HardNet-based controller from different initial conditions (IC). (bottom) Evolution of $V(\bx)$ along trajectories induced by the HardNet-based controller.}
        \label{fig:hardnet-cbf-clf-evolution}
    \end{figure}
    
\end{example}

\begin{example}\longthmtitle{Safe stabilization of unicycle with drift}\label{ex:safe-stabilization-unicycle-drift}
    Consider the following control system, modeling a unicycle with drift navigating on the plane:
    \begin{align*}
        \dot{x} = v\cos(\theta), \quad 
        \dot{y} = -y + v\sin(\theta), \quad
        \dot{\theta} = \omega,
    \end{align*}
    with $\bu = (v,\omega)$ being the control input.
    Suppose that our goal is to design a controller that stabilizes the system to the origin and stays in the safe set $\Cc = \setdef{(x,y,\theta)\in\real^3}{h(x,y,\theta) = -y + (2x+1)^2 + 1 \geq 0}$ (this example is taken from~\cite[Section VII]{PO-JC:19-cdc}.
    Take the positive definite function $W(x,y) = 0.1(x^2+y^2+\theta^2)$ and $\alpha(s) = s$.
    Now, using the inequalities~\eqref{eq:clf-ineq},~\eqref{eq:cbf-ineq}, we can define
    $b_1(x,y,\theta) = ( x\cos(\theta)+y\sin(\theta),\theta )$
    $a_1(x,y,\theta) = -y^2 + 0.1(x^2+y^2+\theta^2)$,
    $b_2(x,y,\theta) = ( -4(2x+1)\cos(\theta)+\sin(\theta), 0 )$,
    $a_2(x,y,\theta) = -y  -2 h(x,y)$.
    As shown in~\cite[Section VII]{PO-JC:19-cdc}, the inequalities $a_1(x,y,\theta) + b_1(x,y,\theta)^\top \bu < 0 $, 
    $a_2(x,y,\theta) + b_2(x,y,\theta)^\top \bu < 0$ are simultaneously feasible in all of $\Cc\backslash\{ \mathbf{0}_2 \}$, so $\Cc\backslash\{ \mathbf{0}_2 \} \subset \Fc$.
    Since the dimension of the input is $2$ and we have $2$ constraints, we can use the same NN as in Example~\ref{ex:safe-stabilization-single-integrator}, even though the state dimension is different.
    Figure~\ref{fig:unicycle} compares the trajectories obtained by directly applying the NN-based controller in closed-loop and using it to warmstart an optimization scheme to compute $u^*$ online.
    In this case, both controllers induce safe trajectories that asymptotically converge to the origin.
    Table~\ref{tab:execution-times-example1} reports the execution times of various controllers for this example as well.
    \begin{figure}[t]
        \centering
        \includegraphics[width = 0.23\textwidth]{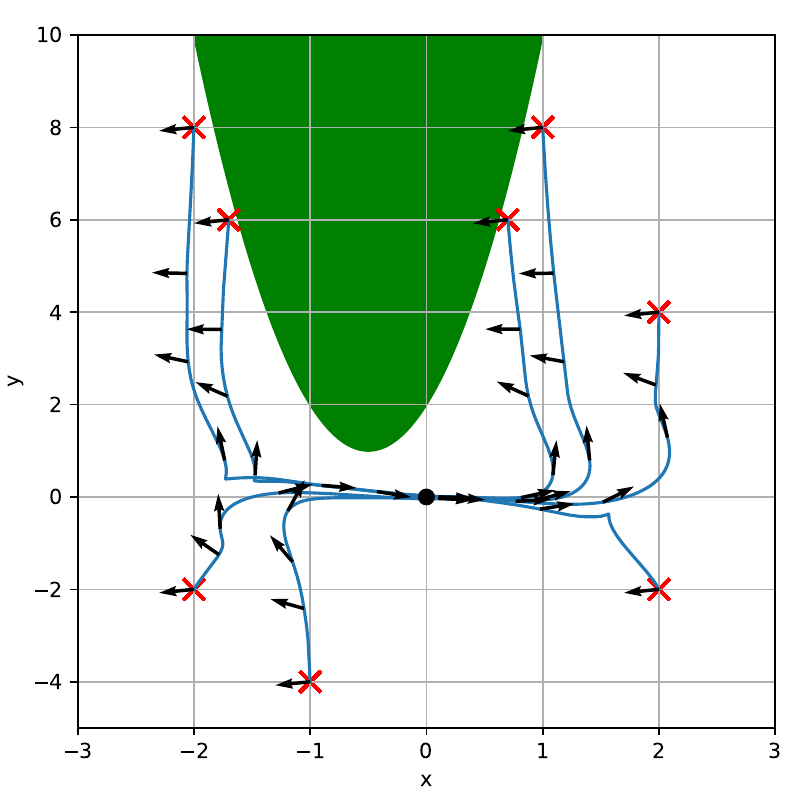}
        \includegraphics[width = 0.23\textwidth]{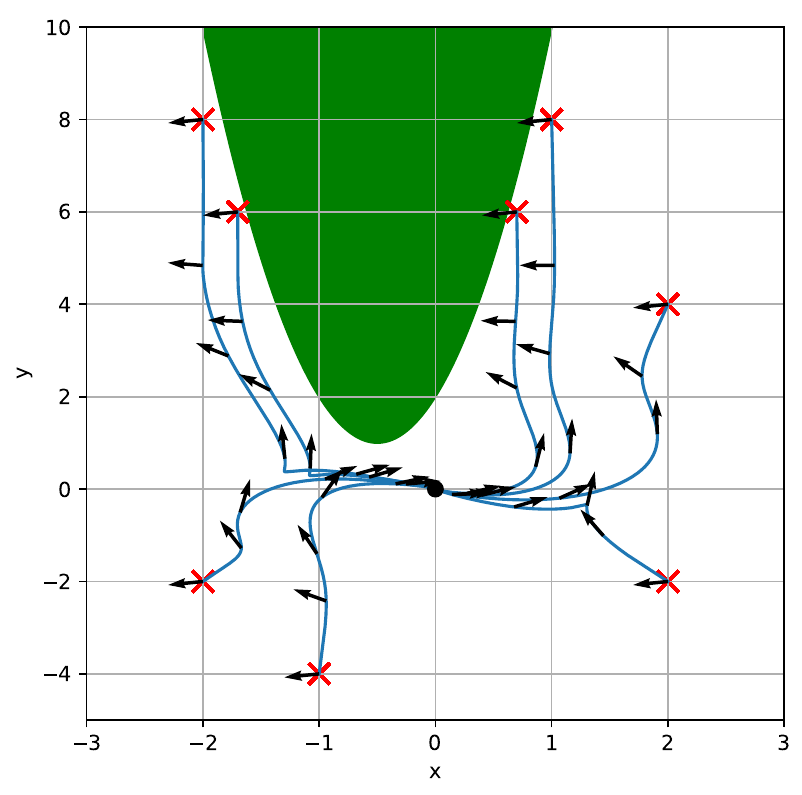}
        \caption{(left) Projection in the $(x,y)$ plane of trajectories of the closed-loop system obtained from the neural network based controller for Example~\ref{ex:safe-stabilization-unicycle-drift}. (right)
        Projection in the $(x,y)$ plane of trajectories of the closed-loop system obtained from numerically finding the controller $u^*$ online and warmstarting the solver with the NN-based controller for Example~\ref{ex:safe-stabilization-unicycle-drift}.
        Initial conditions are denoted by red crosses (and all have an initial orientation $\theta_0 = \pi+0.1$), the origin is the black dot, and the green region denotes the unsafe set. Black arrows indicate the orientation of the unicycle (i.e., the $\theta$ variable) at that point of the trajectory. Observe that the velocity $v$ could be negative, so that at points near the right of the target, the vehicle is "backing up". We also note that due to the presence of a drift term in the $y$ dynamics, the orientation of the unicycle might not be tangent to the trajectory.}
        \label{fig:unicycle}
        \vspace{-3ex}
    \end{figure}
    \demo
\end{example}

\begin{table*}[htb]
    \centering
    {\small
    \begin{tabular}{| c | c | c | c | c | c |}
    \hline
    & 
    NN 
    &
    HardNet
    & 
    \begin{tabular}{@{}c@{}} CLF-CBF QP \\ 
    controller
    \end{tabular}
    & 
    \begin{tabular}{@{}c@{}} $u^*$ with \\ CLF-CBF QP warmstart \end{tabular}
    &
    \begin{tabular}{@{}c@{}} $u^*$ with \\ NN warmstart \end{tabular}
    \\
    \hline
    \begin{tabular}{@{}c@{}} Time (ms) \\ Ex.~\ref{ex:safe-stabilization-single-integrator} $(m=N=2)$
    \end{tabular}
    & $0.053\pm 0.01$ 
    & $0.2 \pm 0.08$
    & $1.72 \pm 0.18$ & $6.9 \pm 2.0$ & $4.7 \pm 1.2$ \\
    \hline
    \begin{tabular}{@{}c@{}} Time (ms) \\ Ex.~\ref{ex:safe-stabilization-single-integrator} $(m=N=10)$
    \end{tabular}
    & $0.680 \pm 0.08$ 
    & $0.823 \pm 0.09$
    & $2.38 \pm 0.37$ & $20.38 \pm 8.35$ & $18.07 \pm 7.16$ \\
    \hline
    \begin{tabular}{@{}c@{}} Time (ms) \\ Ex.~\ref{ex:safe-stabilization-unicycle-drift} 
    \end{tabular}
    & $0.05\pm 0.01$ & $0.2 \pm 0.04$ & $1.72 \pm 0.18$ & $2.9\pm 1.9$ & $1.3 \pm 0.9$ \\
    \hline
    \end{tabular}
    }
    \vspace*{1ex}
    \caption{Average execution times ($\pm$ standard deviation) in milliseconds over the trajectories in Figure~\ref{fig:ballobstacle} for different controller implementations.
    The first column refers to the controller obtained directly as the prediction of the NN.
    The second column refers to the controller obtained as the prediction of the NN when trained with the HardNet method from~\citep{YM-NA:25}.
    The third column refers to the controller obtained by solving the CLF-CBF QP from~\citep{ADA-XX-JWG-PT:17} (using the \texttt{cvxpy} library in Python~\citep{SD-SB:16}).
    The fourth (resp. fifth) column refers to the controller obtained by finding the minimum of the function $\tilde{J}_{\bq}$ using a numerical solver (the \texttt{solve\_ivp} function in Python's \texttt{SciPy} library) 
    and warmstarting the solver with the CLF-CBF QP (resp. the NN prediction).}\label{tab:execution-times-example1}
\end{table*}

Table~\ref{tab:execution-times-example1} reports the execution times of various controllers.
The NN-based controller is the fastest, but does not possess safety and stability guarantees, and is only  approximately optimal (i.e., the NN approximation introduces a suboptimality gap).
The HardNet-based controller is one order of magnitude slower but has safety and stability guarantees by construction, remaining only approximately optimal.
The CLF-CBF QP controller and the controllers obtained by computing $u^*$ online are 
significantly
slower than the NN-based controller,
but are safe, stable, and optimal (in the sense that they minimize the QP cost or $J_{\bp}$, respectively) by construction up to numerical errors.
It is also worth noting that warmstarting the numerical solver to optimize $\tilde{J}_{\bq}$ with the NN prediction also improves its computational time compared to warmstarting it with the CLF-CBF QP.

\section{Conclusions, Limitations, and Future Work}

We have studied the problem of designing a controller that satisfies an arbitrary number of affine inequalities at every point in the state space, which arise when enforcing safety, stability, and input constraints.
We have provided a novel universal formula for controllers satisfying such affine inequalities. The control input is given at every state as the minimizer of a strictly convex function.
To avoid the computation of such minimizer in real time, we have introduced a method based on NN to approximate it.
Remarkably, this NN is universal in the sense that it can be used for any control task with input dimension and number of constraints less than some fixed value, and can be trained with data from just a small subset of the state space.
We have shown the performance of the controller and its NN approximation in various simulation examples.
Future work will 
test our proposed approach to systems with high-dimensional states and inputs, or a high number of constraints. We also plan to extend the proposed methodology to non-affine constraints.

\section{Acknowledgements}

The authors would like to thank Bernat Cots for his valuable help in improving the implementation of the code for generating the training dataset as well as training the NN in the simulations of Section~\ref{sec:simulations}.

\bibliographystyle{unsrtnat}
\bibliography{bib/alias,bib/JC,bib/Main-add,bib/Main}

\begin{thebibliography}{44}
\providecommand{\natexlab}[1]{#1}
\providecommand{\url}[1]{\texttt{#1}}
\expandafter\ifx\csname urlstyle\endcsname\relax
  \providecommand{\doi}[1]{doi: #1}\else
  \providecommand{\doi}{doi: \begingroup \urlstyle{rm}\Url}\fi

\bibitem[Sontag(1998)]{EDS:98}
E.~D. Sontag.
\newblock \emph{Mathematical Control Theory: Deterministic Finite Dimensional
  Systems}, volume~6 of \emph{TAM}.
\newblock Springer, 2 edition, 1998.
\newblock ISBN 0387984895.

\bibitem[Sontag(1983)]{EDS:83}
E.~D. Sontag.
\newblock A {L}yapunov-like characterization of asymptotic controllability.
\newblock \emph{SIAM Journal on Control and Optimization}, 21:\penalty0
  462--471, 1983.

\bibitem[Freeman and Kototovic(1996)]{RAF-PVK:96a}
R.~A. Freeman and P.~V. Kototovic.
\newblock \emph{Robust Nonlinear Control Design: State-space and Lyapunov
  Techniques}.
\newblock Birkhauser Boston Inc., Cambridge, MA, USA, 1996.

\bibitem[Sontag(1989)]{EDS:89a}
E.~D. Sontag.
\newblock A universal construction of {A}rtstein's theorem on nonlinear
  stabilization.
\newblock \emph{Systems \& Control Letters}, 13\penalty0 (2):\penalty0
  117--123, 1989.

\bibitem[Ames et~al.(2019)Ames, Coogan, Egerstedt, Notomista, Sreenath, and
  Tabuada]{ADA-SC-ME-GN-KS-PT:19}
A.~D. Ames, S.~Coogan, M.~Egerstedt, G.~Notomista, K.~Sreenath, and P.~Tabuada.
\newblock Control barrier functions: theory and applications.
\newblock In \emph{{E}uropean {C}ontrol {C}onference}, pages 3420--3431,
  Naples, Italy, 2019.

\bibitem[Ames et~al.(2017)Ames, Xu, Grizzle, and Tabuada]{ADA-XX-JWG-PT:17}
A.~D. Ames, X.~Xu, J.~W. Grizzle, and P.~Tabuada.
\newblock Control barrier function based quadratic programs for safety critical
  systems.
\newblock \emph{IEEE Transactions on Automatic Control}, 62\penalty0
  (8):\penalty0 3861--3876, 2017.

\bibitem[Wieland and Allg{\"o}wer(2007)]{PW-FA:07}
P.~Wieland and F.~Allg{\"o}wer.
\newblock Constructive safety using control barrier functions.
\newblock \emph{IFAC Proceedings Volumes}, 40\penalty0 (12):\penalty0 462--467,
  2007.

\bibitem[Cohen et~al.(2023)Cohen, Ong, Bahati, and Ames]{MC-PO-GB-ADA:23}
M.~Cohen, P.~Ong, G.~Bahati, and A.~D. Ames.
\newblock Characterizing smooth safety filters via the implicit function
  theorem.
\newblock \emph{IEEE Control Systems Letters}, 7:\penalty0 3890--3895, 2023.

\bibitem[Li et~al.(2024)Li, Sun, Koelewijn, and Weiland]{ML-ZS-PJWK-SW:24}
M.~Li, Z.~Sun, P.~J.~W. Koelewijn, and S.~Weiland.
\newblock A tunable universal formula for safety-critical control.
\newblock \emph{arXiv preprint arXiv:2403.06285}, 2024.

\bibitem[Garg and Panagou(2021)]{KG-DP:21}
K.~Garg and D.~Panagou.
\newblock Robust control barrier and control {L}yapunov functions with
  fixed-time convergence guarantees.
\newblock In \emph{{A}merican {C}ontrol {C}onference}, pages 2292--2297, New
  Orleans, LA, July 2021.

\bibitem[Li and Sun(2023)]{ML-ZS:23-acc}
M.~Li and Z.~Sun.
\newblock A graphical interpretation and universal formula for safe
  stabilization.
\newblock In \emph{{A}merican {C}ontrol {C}onference}, pages 3012--3017, San
  Diego, California, 2023.

\bibitem[Romdlony and Jayawardhana(2016)]{MZR-BJ:16}
M.~Z. Romdlony and B.~Jayawardhana.
\newblock Stabilization with guaranteed safety using control {L}yapunov-barrier
  function.
\newblock \emph{Automatica}, 66:\penalty0 39--47, 2016.

\bibitem[Mestres and Cort\'es(2023)]{PM-JC:23-csl}
P.~Mestres and J.~Cort\'es.
\newblock Optimization-based safe stabilizing feedback with guaranteed region
  of attraction.
\newblock \emph{IEEE Control Systems Letters}, 7:\penalty0 367--372, 2023.

\bibitem[Ong and Cort\'es(2019)]{PO-JC:19-cdc}
P.~Ong and J.~Cort\'es.
\newblock Universal formula for smooth safe stabilization.
\newblock In \emph{{IEEE} Conf.\ on Decision and Control}, pages 2373--2378,
  Nice, France, December 2019.

\bibitem[Molnar and Ames(2023)]{TGM-ADA:23}
T.~G. Molnar and A.~D. Ames.
\newblock Composing control barrier functions for complex safety
  specifications.
\newblock \emph{IEEE Control Systems Letters}, 7:\penalty0 3615--3620, 2023.

\bibitem[Tan and Dimarogonas(2024)]{XT-DVD:24}
X.~Tan and D.~V. Dimarogonas.
\newblock On the undesired equilibria induced by control barrier function based
  quadratic programs.
\newblock \emph{Automatica}, 159:\penalty0 111359, 2024.

\bibitem[Mestres et~al.(2025{\natexlab{a}})Mestres, Mousavi, Ong, Yang, Das,
  Burdick, and Ames]{PM-SSM-PO-LY-ED-JWB-ADA:25}
P.~Mestres, S.~S. Mousavi, P.~Ong, L.~Yang, E.~Das, J.~W. Burdick, and A.~D.
  Ames.
\newblock Explicit control barrier function-based safety filters and their
  resource-aware computation.
\newblock 2025{\natexlab{a}}.
\newblock Available at https://arxiv.org/pdf/2512.10118.

\bibitem[Singletary et~al.(2020)Singletary, Chen, and Ames]{AS-YC-ADA:20}
A.~Singletary, Y.~Chen, and A.~D. Ames.
\newblock Control barrier functions for sampled-data systems with input delays.
\newblock pages 804--809, December 2020.

\bibitem[Taylor et~al.(2021)Taylor, Ong, Cort\'es, and
  Ames]{AJT-PO-JC-AA:21-csl}
A.~J. Taylor, P.~Ong, J.~Cort\'es, and A.~Ames.
\newblock Safety-critical event triggered control via input-to-state safe
  barrier functions.
\newblock \emph{IEEE Control Systems Letters}, 5\penalty0 (3):\penalty0
  749--754, 2021.

\bibitem[Breeden et~al.(2022)Breeden, Garg, and Panagou]{JB-KG-DP:22}
J.~Breeden, K.~Garg, and D.~Panagou.
\newblock Control barrier functions in sampled-data systems.
\newblock \emph{IEEE Control Systems Letters}, 6:\penalty0 367--372, 2022.

\bibitem[Taylor et~al.(2022)Taylor, Ong, Molnar, and Ames]{AJT-PO-TGM-ADA:22}
A.~J. Taylor, P.~Ong, T.~G. Molnar, and A.~D. Ames.
\newblock Safe backstepping with control barrier functions.
\newblock In \emph{{IEEE} Conf.\ on Decision and Control}, pages 5775--5782,
  Canc\'un, Mexico, 2022.
\newblock \doi{10.1109/CDC51059.2022.9992763}.

\bibitem[Morris et~al.(2015)Morris, Powell, and Ames]{BJM-MJP-ADA:15}
B.~J. Morris, M.~J. Powell, and A.~D. Ames.
\newblock Continuity and smoothness properties of nonlinear optimization-based
  feedback controllers.
\newblock In \emph{{IEEE} Conf.\ on Decision and Control}, pages 151--158,
  Osaka, Japan, Dec 2015.

\bibitem[Alyaseen et~al.(2025)Alyaseen, Atanasov, and
  Cort\'es]{MA-NA-JC:25-tac}
M.~Alyaseen, N.~Atanasov, and J.~Cort\'es.
\newblock Continuity and boundedness of minimum-norm {CBF}-safe controllers.
\newblock \emph{IEEE Transactions on Automatic Control}, 70\penalty0
  (6):\penalty0 4148--4154, 2025.

\bibitem[Mestres et~al.(2025{\natexlab{b}})Mestres, Allibhoy, and
  Cort\'es]{PM-AA-JC:25-ejc}
P.~Mestres, A.~Allibhoy, and J.~Cort\'es.
\newblock Regularity properties of optimization-based controllers.
\newblock \emph{European Journal of Control}, 81:\penalty0 101098,
  2025{\natexlab{b}}.

\bibitem[Chen et~al.(2022)Chen, Wang, Atanasov, Kumar, and
  Morari]{SWC-TW-NA-VK-MM:22}
S.~W. Chen, T.~Wang, N.~Atanasov, V.~Kumar, and M.~Morari.
\newblock Large scale model predictive control with neural networks and primal
  active sets.
\newblock \emph{Automatica}, 135:\penalty0 109947, 2022.

\bibitem[Chen et~al.(2018)Chen, Saulnier, Atanasov, Lee, Kumar, and
  Pappas]{SC-KS-NA-DDL-VK-GP:18}
S.~Chen, K.~Saulnier, N.~Atanasov, D.~D. Lee, V.~Kumar, and G.~Pappas.
\newblock Approximating explicit model predictive control using constrained
  neural networks.
\newblock In \emph{{A}merican {C}ontrol {C}onference}, pages 1520--1527,
  Milwaukee, Wisconsin, {USA}, 2018.

\bibitem[Hertneck et~al.(2018)Hertneck, Köhler, Trimpe, and
  Allgöwer]{MH-JK-ST-FA:18}
M.~Hertneck, J.~Köhler, S.~Trimpe, and F.~Allgöwer.
\newblock Learning an approximate model predictive controller with guarantees.
\newblock \emph{IEEE Control Systems Letters}, 2\penalty0 (3):\penalty0
  543--548, 2018.

\bibitem[Zhang et~al.(2019)Zhang, Bujarbaruah, and Borrelli]{XZ-MB-FB:19}
X.~Zhang, M.~Bujarbaruah, and F.~Borrelli.
\newblock Safe and near-optimal policy learning for model predictive control
  using primal-dual neural networks.
\newblock In \emph{{A}merican {C}ontrol {C}onference}, pages 354--359,
  Philadelphia, Pennsylvania, {USA}, 2019.

\bibitem[Borrelli et~al.(2017)Borrelli, Bemporad, and Morari]{FB-AB-MM:17}
F.~Borrelli, A.~Bemporad, and M.~Morari.
\newblock \emph{Predictive Control for Linear and Hybrid Systems}.
\newblock Cambridge University Press, Cambridge, UK, 2017.

\bibitem[Min and Azizan(2025)]{YM-NA:25}
Y.~Min and N.~Azizan.
\newblock Hard-constrained neural networks with universal approximation
  guarantees.
\newblock \emph{arXiv preprint arXiv:2410.10807}, 2025.

\bibitem[Grontas et~al.(2025)Grontas, Terpin, Balta, D'Andrea, and
  Lygeros]{PDG-AT-ECB-RD-JL:25}
P.~D. Grontas, A.~Terpin, E.~C. Balta, R.~D'Andrea, and J.~Lygeros.
\newblock Pinet: optimizing hard-constrained neural networks with orthogonal
  projection layers.
\newblock \emph{arXiv preprint arXiv:2508.10480}, 2025.

\bibitem[Mestres and Cort\'es(2025)]{PM-JC:25-tac}
P.~Mestres and J.~Cort\'es.
\newblock Converse theorems for certificates of safety and stability.
\newblock \emph{IEEE Transactions on Automatic Control}, 70\penalty0 (12),
  2025.
\newblock To appear. Available at \url{https://arxiv.org/abs/2406.14823}.

\bibitem[Artstein(1983)]{ZA:83}
Z.~Artstein.
\newblock Stabilization with relaxed controls.
\newblock \emph{Nonlinear Analysis}, 7\penalty0 (11):\penalty0 1163--1173,
  1983.

\bibitem[Dontchev and Rockafellar(2014)]{ALD-RTR:14}
A.~L. Dontchev and R.~T. Rockafellar.
\newblock \emph{{Implicit {F}unctions and {S}olution {M}appings: {A} {V}iew
  from {V}ariational {A}nalysis; 2nd ed.}}
\newblock Springer, New York, NY, 2014.

\bibitem[Khalil(2002)]{HK:02}
H.~Khalil.
\newblock \emph{Nonlinear Systems, 3rd ed.}
\newblock Prentice Hall, Englewood Cliffs, NJ, 2002.

\bibitem[Cybenko(1989)]{GC:89}
G.~Cybenko.
\newblock Dynamic load balancing for distributed memory multiprocessors.
\newblock \emph{Journal of Parallel and Distributed Computing}, 7\penalty0
  (2):\penalty0 279--301, 1989.

\bibitem[Sontag(2008)]{EDS:08}
E.~D. Sontag.
\newblock Input to state stability: Basic concepts and results.
\newblock \emph{Nonlinear and Optimal Control Theory}, 1932:\penalty0 163--220,
  2008.

\bibitem[Kolathaya and Ames(2018)]{SK-ADA:18}
S.~Kolathaya and A.~D. Ames.
\newblock Input-to-state safety with control barrier functions.
\newblock \emph{IEEE Control Systems Letters}, 3\penalty0 (1):\penalty0
  108--113, 2018.

\bibitem[Bullo(2026)]{FB-CTDS}
F.~Bullo.
\newblock \emph{Contraction Theory for Dynamical Systems}.
\newblock Kindle Direct Publishing, {1.3} edition, 2026.
\newblock ISBN 979-8836646806.
\newblock URL \url{https://fbullo.github.io/ctds}.

\bibitem[Yarotsky(2017)]{DY:17}
D.~Yarotsky.
\newblock Error bounds for approximations with deep relu networks.
\newblock \emph{Neural {N}etworks}, 94:\penalty0 103--114, 2017.

\bibitem[Wright(1997)]{SJW:97}
S.~J. Wright.
\newblock \emph{Primal-{D}ual {I}nterior {P}oint {M}ethods}.
\newblock Society for {I}ndustrial and {A}pplied {M}athematics, 1997.

\bibitem[Elfwing et~al.(2018)Elfwing, Uchibe, and Doya]{SE-EU-KD:18}
S.~Elfwing, E.~Uchibe, and K.~Doya.
\newblock Sigmoid-weighted linear units for neural network function
  approximation in reinforcement learning.
\newblock \emph{Neural {N}etworks}, 107:\penalty0 3--11, 2018.

\bibitem[Diamond and Boyd(2016)]{SD-SB:16}
S.~Diamond and S.~Boyd.
\newblock Cvxpy: A python-embedded modeling language for convex optimization.
\newblock \emph{Journal of Machine Learning Research}, 17\penalty0
  (83):\penalty0 1--5, 2016.

\bibitem[Virtanen et~al.(2020)Virtanen, Gommers, Oliphant,
  et~al.]{PV-RG-TEO:20}
P.~Virtanen, R.~Gommers, T.~E. Oliphant, et~al.
\newblock {{SciPy} 1.0: Fundamental Algorithms for Scientific Computing in
  Python}.
\newblock \emph{Nature Methods}, 17:\penalty0 261--272, 2020.

\end{thebibliography}

\end{document}